\newcommand{\stkout}[1]{\ifmmode\text{\sout{\ensuremath{#1}}}\else\sout{#1}\fi}
\newtheorem{theorem}{Theorem}[section]
\newtheorem{remark}[theorem]{Remark}
\newtheorem{assumption}[theorem]{Assumption}
\newtheorem{lemma}[theorem]{Lemma}
\newtheorem{proposition}[theorem]{Proposition}
\def \E{\mathsf{E}}
\def \P{\mathsf{P}}
\def \R{\mathbb{R}}
\def\d{\mathrm{d}}
\DeclareMathOperator*{\argmin}{argmin}
\DeclareMathOperator*{\Span}{Span}
\definecolor{red}{rgb}{1.0,0.0,0.0}
\definecolor{blu}{rgb}{0.0,0.0,1.0}
\definecolor{gre}{rgb}{0.03,0.50,0.03}
\title[Epidemic Control via Lockdown]{Taming the Spread of an Epidemic by Lockdown Policies}
\author[Federico]{Salvatore Federico}
\author[Ferrari]{Giorgio Ferrari}
\address{S.~Federico: Dipartimento di Economia, Universit\`a di Genova, Via F.\ Vivaldi 5, 16126, Genova, Italy}
\email{\href{mailto:salvatore.federico@unige.it}{salvatore.federico@unige.it}}
\address{G.~Ferrari: Center for Mathematical Economics (IMW), Bielefeld University, Universit\"atsstrasse 25, 33615, Bielefeld, Germany}
\email{\href{mailto:giorgio.ferrari@uni-bielefeld.de}{giorgio.ferrari@uni-bielefeld.de}}
\date{\today}
\numberwithin{equation}{section}
\begin{document}

\begin{abstract} 
We study the problem of a policymaker who aims at taming the spread of an epidemic while minimizing its associated social costs. The main feature of our model lies in the fact that the disease's transmission rate is a diffusive stochastic process whose trend can be adjusted via costly confinement policies. We provide a complete theoretical analysis, as well as numerical experiments illustrating the structure of the optimal lockdown policy. In all our experiments the latter is characterized by three distinct periods: the epidemic is first let freely evolve, then vigorously tamed, and finally a less stringent containment should be adopted. Moreover, the optimal containment policy is such that the product ``reproduction number $\times$ percentage of susceptible'' is kept after a certain date strictly below the critical level of one, although the reproduction number is let oscillate above one in the last more relaxed phase of lockdown. {{Finally, an increase in the fluctuations of the transmission rate is shown to give rise to an earlier beginning of the optimal lockdown policy, which is also diluted over a longer period of time.}} 
\end{abstract}

\maketitle

\smallskip

{\textbf{Keywords}}: SIR model; optimal stochastic control; viscosity solution; epidemic; lockdown.

\smallskip

{\textbf{MSC2010 subject classification}}: 93E20, 49N90, 92D30, 97M40.
  
\smallskip

{\textbf{JEL classification}}: C61, I18.


\section{Introduction}
\label{introduction}

During the current Covid-19 pandemic, policymakers are dealing with the trade-off between safeguarding public health and damming the negative economic impact of severe lockdowns. The fight against the virus is made especially hard by the absence of a vaccination and the consequent random horizon of any policy, as well as by the extraordinariness of the event. In particular, the lack of data from the past, the difficulty of rapidly and accurately tracking infected, and super-spreading events such as mass gatherings, give rise to a random behavior of the transmission rate/reproduction number of the virus (see, e.g., \cite{Hotz}\footnote{Refer also to the website \url{https://stochastik-tu-ilmenau.github.io/COVID-19/index.html}}). In this paper we propose and study a model for the optimal containment of infections due to an epidemic in which both the time horizon and the transmission rate of the disease are stochastic.

In the last months, the scientific literature experienced an explosion in the number of works where the statistical analysis and the mathematical modeling of epidemic models is considered, as well as the economic and social impact of lockdown policies is investigated. 
A large bunch of papers provides numerical studies related to the Covid-19 epidemics in the setting of classical epidemic models or of generalization of them. Among many others, we refer to \cite{Alvarez}, that studies numerically optimal containment policies in the context of a \emph{Susceptible-Infected-Recovered} (SIR) model (cf.\ \cite{KermackMcKendrick}); \cite{Kantner} which also allows for seasonal effects; \cite{Toda}, which estimates the transmission rate in various countries for a SIR model with given and fixed transmission rate; \cite{Asprietal}, which combines a careful numerical study with an elegant theoretical study of optimal lockdown policies in the SEAIRD (susceptible (S), exposed (E), asymptomatic (A), infected (I), recovered (R), deceased (D)) model; \cite{Erhan}, where a detailed numerical analysis is developed for a SIR model of the Covid-19 pandemic in which herd immunity, behavior-dependent transmission rates, remote workers, and indirect externalities of lockdown are explicitly considered; \cite{Acemoglu}, where -- in the context of a multi-group SIR model -- it is investigated the effect of lockdown policies which are targeted to different social groups (especially, the ``young'', the ``middle-aged'' and the ``old''); \cite{Gollier}, in which a multi-risk SIR model with heterogeneous citizens is calibrated on the Covid-19 pandemic in order to study the impact on incomes and mortality of age-specific confinements and Polymerase chain reaction (PCR) tests; \cite{Favero}, which calibrates and tests a SEIRD model (susceptible (S), exposed (E), infected (I), recovered (R), deceased (D)) of the spread of Covid-19 in an heterogeneous economy where different age and sectors are related to distinct risks.

A theoretical study of the optimal confinement policies in epidemic models is usually challenging because of the nonlinear structure of the underlying dynamical system. The first results on a control-theoretic approach to confinement policies are perhaps those presented in Chapter 4 of \cite{Behncke}, where it is shown that the optimal policy depends only on the shadow price difference between infected and susceptible. In the context of an optimal timing problem, \cite{Jacco} uses a continuous-time Markov chain model to study the value and optimal exercise decision of two (sequential) options: the option to intervene on the epidemic and, after intervention has started, the option to end the containment policies. Control-theoretic analysis are also presented in the recent \cite{KruseStrack} and \cite{Micloetal}. In \cite{Micloetal} the authors study a deterministic SIR model in which the social planner acts in order to keep the transmission rate below its natural level with the ultimate aim not to overwhelm the national health-care system. 
The minimization of a social cost functional is instead considered in \cite{KruseStrack}, in the context of a deterministic SIR model over a finite time-horizon. The resulting control problem is tackled via the Pontryagin maximum principle and then a thorough numerical illustration is also provided. 

Inspired by the deterministic problems of \cite{KruseStrack} and \cite{Micloetal} (see also \cite{Acemoglu, Alvarez}, among others), and motivated by the need of incorporating random fluctuations in the disease's transmission rate, in this paper we consider a stochastic control-theoretic version of the classical SIR model of Kermack and McKendrick \cite{KermackMcKendrick}. A population with finite size is divided in three different groups: healthy people that are susceptible to the disease, infected individuals, and people that have recovered (and are not anymore susceptible) or dead. However, differently to the classical SIR model, we suppose that disease's transmission rate is time-dependent and stochastic. In particular, it evolves as a general diffusion process whose trend can be adjusted by a social planner through policies like social restrictions and lockdowns. The randomness in the transmission rate is modeled by a Wiener process representing all those factors affecting the transmission rate and that are not under the direct control of the regulator. The social planner faces the trade-off between the expected social and economic costs (e.g., drops in the gross domestic product) arising from severe restrictions and the expected costs induced by the number of infections that -- if uncontrolled -- might strongly impact on the national health-care system and, more in general, on the social well-being. The social planner aims at minimizing those total expected costs up to the time at which a vaccination against the disease is discovered. In our model, such a time is also random and independent of the Wiener process.

We provide a complete theoretical study of our model by showing that the minimal cost function (value function) is a classical twice-continuously differentiable solution to its corresponding Hamilton-Jacobi-Bellman (HJB) equation, and by identifying an optimal control in feedback form\footnote{The aforementioned regularity of the value function is a remarkable result on its own. Indeed, although the state process is degenerate (as the Wiener process only affects the dynamics of the transmission rate), we can show that the so-called H\"ormander's condition (cf.\ \cite{Nualart}) holds true for any choice of the model's parameters. This then ensures the existence of a smooth probability transition density for the underlying (uncontrolled) stochastic process, which in turn enables to prove substantial regularity of the value function.}. 
From a technical point of view, the main difference between the models in \cite{Acemoglu, Alvarez, Erhan, KruseStrack, Micloetal} and ours, is that we deal with a stochastic version of the SIR model, instead of a deterministic one. As a matter of fact, in the aforementioned works the transmission rate is a deterministic control variable, while it is a controlled stochastic state variable in our paper. Moreover, our formulation is also different from that of other stochastic SIR models where the random transmission rate is chosen in such a way that only the levels of infected and susceptible people become affected by noise, with the transmission rate itself not being a state variable (see, e.g., \cite{Jiang,Tornatore} and references therein).
To the best of our knowledge, ours is the first work considering the transmission rate as a diffusive stochastic state variable and providing the complete theoretical analysis of the resulting control problem.

{{In addition to its theoretical value,}} the determination of an optimal control in feedback form allows us to perform numerical experiments aiming at showing some implications of our model. For the numerical analysis we specialize the dynamics of the transmission rate, that we take to be mean-reverting and bounded between $0$ and some $\gamma>0$ (cf.\ \eqref{eq:ZOUbis}). In this case study, the containment policies employed by the social planner have the effect of modifying the long-run mean of the transmission rate, towards which the process converges at an exponential rate. Moreover, we take a separable social cost function (cf.\ \eqref{costexample}). This is quadratic both in the regulator's effort and in the percentage of infected people. 

An interesting effect which is in fact common to all our numerical experiments is that the optimal lockdown policy is characterized by three distinct periods. In the first phase it is optimal to let the epidemic freely evolve, then the social restrictions should be stringent, and finally should be gradually relaxed in a third period. We also investigate which is the effect of the maximal level $L$ of allowed containment measures (i.e., the lockdown policy can take values in $[0,L]$) on the final percentage of recovered, which in fact turns out to be decreasing with respect to $L$. This then suggests that the case $L=1$ -- which leads in a shorter period to the definitive containment of the disease with the smallest percentage of final recovered --  might be thought of as optimal in the trade-off between social costs and final number of recovered.

We observe that if the epidemic spread is left uncontrolled, then its reproduction number $(\mathcal{R}_t)_t$ fluctuates around $1.8$ and the final percentage of recovered (i.e.\ the total percentage of infected during the disease) is approximately $72\%$ of the society after circa 7 months (in all our simulations the initial infected were $1\%$ of the population). On the other hand, when $L=1$, under the optimal policy we have a relative reduction of circa $30\%$ of the total percentage of recovered individuals, and the reproduction number drops below $0.6$ in the period of severe lockdown (circa 60 days). Moreover, the optimal containment is such that the so-called ``herd immunity'' is reached as the product $\mathcal{R}_tS_t$ (reproduction number $\times$ percentage of susceptible) becomes strictly smaller than the critical level of one, even if $\mathcal{R}_t$ oscillates at around $1.7$ in the last more relaxed phase of lockdown. {{Finally, we observe that an increase of the fluctuations of the transmission rate $\beta$ have the effect of anticipating the beginning of the lockdown policies, of diluting the actions over a longer period, and of keeping a larger level of containment in the long run. This can be explained by thinking that an higher uncertainty in the transmission rate induces the policymaker to act earlier and over a longer period in order to prevent positive larger shocks of $\beta$.}}

The rest of the paper is organized as follows. In Section \ref{sec:SIR} we set up the model and the social planner problem. In Section \ref{sec:mainresult} we develop the control-theoretic analysis and provide the regularity of the minimal cost function and an optimal control in feedback form. In Section \ref{sec:numerics} we present our numerical examples, while concluding remarks are made in Section \ref{sec:concl}. Finally, Appendix \ref{sec:app} collects the proof of some technical results needed in Section \ref{sec:mainresult}.


\section{Problem Formulation}
\label{sec:setting}

\subsection{The Stochastic Controlled SIR Model}
\label{sec:SIR}

We model the spread of the infection by relying on a generalization of the classical SIR model that dates back to the work by Kermack and McKendrick \cite{KermackMcKendrick}. The society has population $N$ and it consists of three different groups. The first group is formed by those people who are healthy, but susceptible to the disease; the second group contains those who are infected, while the last cohort consists of those who are recovered or dead. In line with the classical SIR model, we assume that, once recovered, an individual stays healthy for ever. We denote by $S_t$ the percentage of individuals who are susceptible at time $t\geq0$, by $I_t$ the percentage of infected, and by $R_t$ the fraction of recovered or dead. Clearly, $S_t + I_t + R_t=1$ for all $t\geq0$.

The fraction of infected people grows at a rate which is proportional to the fraction of society that it is still susceptible to the disease. In particular, letting $\beta_t$ be the instantaneous transmission rate of the disease, during an infinitesimal interval of time $\d t$, each infected individual generates $\beta_t S_t$ new infected individuals. It thus follows that the percentage of healthy individuals that get infected within $\d t$ units of time is
$I_t  \beta_t S_t.$

Notice that the instantaneous transmission rate $\beta_t$ measures the disease's rate of infection, as well as the the average number of contacts per person per time. In this regard, $\beta_t$ can be thus influenced by a social planner via policies that effectively cap the social interaction, like social distancing and lockdown.

During an infinitesimal interval of time $\d t$, the fraction of infected is reduced by $\alpha I_t$, since infected either recover from the disease, or die because of it at a rate $\alpha>0$. 

According to the previous considerations, the dynamics of $S_t$ and $I_t$ can be thus written as
\begin{equation}
\label{eq:S}
\d S_t = - \beta_t S_t I_t \d t, \quad t>0, \qquad S_0= x,
\end{equation}
and
\begin{equation}
\label{eq:I}
\d I_t = \big(\beta_t S_t I_t - \alpha I_t\big) \d t, \quad t>0, \qquad I_0= y,
\end{equation}
where $(x,y) \in (0,1)^2$ are given initial values such that\footnote{The choice of considering $x+y<1$ -- i.e.\ of having an initial strictly positive percentage of recovered -- is only done in order to deal with an open set in the subsequent mathematical formulation of the problem. As a matter of fact, such a condition is not restrictive from the technical point of view as our results still apply if $x+y<\ell$, for some $\ell>1$, thus covering the case $x+y=1$ as well.} $x+y \in (0,1)$.

Notice that for any $t\geq0$, and for any choice of $(\beta_t)_t$ we can write
\begin{equation}
\label{eq:SolSI}
S_t = x e^{-\int_0^t \beta_u I_u \d u} \qquad \text{and} \qquad I_t = y e^{-\alpha t + \int_0^t \beta_u S_u \d u},
\end{equation}
and therefore $S_t>0$ and $I_t>0$ for all $t\geq0$. Moreover, summing up \eqref{eq:S} and \eqref{eq:I} we have $\d(S_t + I_t) = -\alpha I_t <0$ for all $t>0$, which then implies that $S_t + I_t < 1$ for all $t\geq0$. 

We depart from the classical SIR model by assuming that the transmission rate $\beta_t$ is time-varying, stochastic, and may be controlled. More precisely, we let $(\Omega, \mathcal{F}, \mathbb{F}:=(\mathcal{F}_t)_t, \P)$ be a complete filtered probability space with filtration $\mathbb{F}$ satisfying the usual conditions, and we define on that a one-dimensional Brownian motion $(W_t)_t$.
For a given and fixed $L\geq 0$, and for any $(\xi_t)_t$ belonging to
$$\mathcal{A}:=\big\{\xi:\, \Omega \times [0,\infty) \to [0,L],\,\, (\xi_t)_t\,\, \mathbb{F}-\text{progressively measurable}\big\},$$
we assume that the transmission rate evolves according to the stochastic differential equation
\begin{equation}
\label{eq:Z}
\d \beta_t = b(\beta_t,\xi_t) \d t + \sigma(\beta_t)\d W_t, \quad t>0, \qquad \beta_0= z >0.
\end{equation}
The process $(\xi_t)_t$ influences the trend of the transmission rate and it should be interpreted as any effort devoted by the social planner to the decrease of the transmission rate. In this sense, $\xi=0$  corresponds to the case of no effort done to decrease the disease, whereas the case $\xi=L$ corresponds to the maximal effort. To fix ideas, $\xi_t$ may represent a percentage of social/working lockdown at time $t$ and $L$ corresponds to the maximal implementable value of such lockdown (e.g. $60\%$, etc.). On the other hand, the Brownian motion $(W_t)_t$ models any shock affecting the transmission rate and which is not under the control of the social planner. 

Regarding the dynamics of $(\beta_t)_t$ we make the following \textbf{standing assumption}.
{{
\begin{assumption}
\label{assZ}
\hspace{10cm}
\begin{itemize}
\item[(i)] For every $\xi\in\mathcal{A}$, there exists a unique strong solution to \eqref{eq:Z} and it lies in an open interval $\mathcal{I}\subseteq(0,\infty)$.

\item[(ii)] $b:\mathcal{I}\times [0,L] \to \R$ is bounded, infinitely many times continuously differentiable with respect to its first argument, and has bounded derivatives of any order; that is, there exists $K_b>0$ such that
$$\sup_{n \in \mathbb{N}} \sup_{(z,\xi) \in \mathcal{I}\times [0,L]}\big|\frac{\partial^n}{\partial z^n}b(z,\xi)\big| \leq K_b.$$

\item[(iii)] $\sigma:\mathcal{I} \to (0,\infty)$ is bounded, infinitely many times continuously differentiable with respect to its first argument, and has bounded derivatives of any order; that is, there exists $K_{\sigma}>0$ such that
$$\sup_{n \in \mathbb{N}} \sup_{z \in \mathcal{I}}|\sigma^{(n)}(z)| \leq K_{\sigma}.$$


\end{itemize}
\end{assumption}
}}

A reasonable dynamics of the transmission rate $(\beta_t)_t$ is the mean-reverting 
\begin{equation}
\label{eq:ZOU}
\d \beta_t = \vartheta\Big(\widehat{\beta}\big(L-\xi_t\big) - \beta_t\Big) \d t + \sigma\beta_t(\gamma - \beta_t)\d W_t, \quad t>0, \qquad \beta_0= z \in (0,\gamma),
\end{equation}
for some $\vartheta, \gamma, \sigma>0$,  $\widehat{\beta} \in (0,\gamma)$. In this case, in can be shown that $0$ and $\gamma$ are unattainable by the diffusion $(\beta_t)_t$, which then takes values in the interval $\mathcal{I}=(0,\gamma)$ for any $t\geq 0$. The level $\widehat{\beta}$ can be seen as the natural transmission rate of the disease, towards which the transmission rate reverts at rate $\vartheta$ when $\xi\equiv0$. Finally, the level $\gamma$ is the maximal possible transmission rate of the disease, and $\sigma$ is a measure of the fluctuations of $(\beta_t)_t$ around $\widehat{\beta}$. We will employ this dynamics in our numerical illustrations (cf.\ Section \ref{sec:numerics} below). {{Notice that dynamics \eqref{eq:ZOU} fulfills all the requirements of Assumption \ref{assZ}; this is shown, for the sake of completeness, in Proposition \ref{prop:betadyn} in Appendix \ref{sec:app}. Moreover, if $\xi_t \equiv L$, then the transmission rate defined through \eqref{eq:ZOU} reaches $0$ asymptotically, as its drift is negative and its diffusion coefficient stays bounded. Hence, under the maximal lockdown policy, the disease is asymptotically eradicated.}}

{{
\begin{remark}
\label{rem:micromodel}
Uncertainty comes into our model through the transmission rate. A fully probabilistic setting, in which the transitions from $S$ to $I$ to $R$ are driven by Poisson processes, can be formulated by following the discussion of Example 2 in \cite{Greenwood-Gordillo} or Section 3 in \cite{Allen2017} (see also the recent \cite{Donsimoni}). We describe informally such a model in the sequel.

Assume that the fraction of susceptible and infected $(S_t,I_t)_t$ follow a continuous-time Markov chain with state space $\mathcal{S}:=\{(s,i) \in \{0,\frac{1}{N},\dots,1\}^2: i + s < 1\}$, where $N$ denotes the fixed population's size. For a small time interval $\Delta t$ let
$$p_{(s,i), (s+k,i+j)}(\Delta t) := \P\big( (S_{t+\Delta t}, I_{t+\Delta t}) = (s+k,i+j) \big| (S_{t}, I_{t}) = (s,i)\big),$$ 
and assume that
\begin{equation*}
p_{(s,i), (s+k,i+j)}(\Delta t)=\left\{
\begin{array}{lr}
\displaystyle \beta_t I_t S_t \Delta t + o(\Delta t)\,\,\qquad \qquad \quad \,\,\,\,\, \mbox{if $(k,j)=(-\frac{1}{N},\frac{1}{N})$}\\[+14pt]
\displaystyle \alpha I_t \Delta t + o(\Delta t) \,\,\qquad \qquad \qquad \,\,\,\,\,\,\, \mbox{if $(k,j)=(0,-\frac{1}{N})$}\\[+14pt]
\displaystyle \big(1 - \alpha I_t  - \beta_t I_t S_t\big)\Delta t + o(\Delta t) \,\,\,\,\,\, \mbox{if $(k,j)=(0,0)$}\\[+14pt]
\displaystyle o(\Delta t) \,\,\qquad \qquad \qquad \qquad \qquad\,\,\,\,\,\,\, \mbox{otherwise}.
\end{array}
\right.
\end{equation*}
It thus follows that the increments $\Delta S_t := S_{t+\Delta t} - S_{t}$ and $\Delta I_t := I_{t+\Delta t} - I_{t}$ can be written as
\begin{equation}
\label{eq:DeltaS}
\Delta S_t = - \beta_t I_t S_t \Delta t  + \Delta N^{(1)}_t
\end{equation}
and
\begin{equation}
\label{eq:DeltaI}
\Delta I_t =  \Big(\beta_t I_t S_t - \alpha I_t\Big) \Delta t  - \Delta N^{(1)}_t + \Delta N^{(2)}_t,
\end{equation}
where $\Delta N^{(1)}_t$ and $\Delta N^{(2)}_t$ are conditionally centered Poisson increments with zero mean and conditional variances $\beta_t I_t S_t \Delta t$ and $\alpha I_t \Delta t$, respectively.

On the other hand, let $\Delta t$ be such that $\Delta t = \sum_{i=1}^n \Delta t_i$, where $\Delta t_i = t_i - t_{i-1}$, $i=1,\dots,n$, $t_0=t$ and $t_n=t + \Delta t$.
Then, the increment of the transmission rate (in absence of any social planner's intervention) can be written as
$$\Delta \beta_t = \sum_{i=1}^n \Delta \beta_{t_i},$$
where $\Delta \beta_t:=\beta_{t + \Delta t} - \beta_t$ and $\Delta \beta_{t_i}= \beta_{t_i} - \beta_{t_{i-1}}$. If $\Delta t_i$ are sufficiently small, we can reasonably argue that the random variables $\Delta \beta_{t_i}$ on the interval $\Delta t$ are independent and identically distributed. For $n$ sufficiently large, the Central Limit Theorem implies that $\Delta \beta_t$ has an approximate Gaussian distribution. We assume that such a mean is $b(\beta_t,0) \Delta t$ and the variance $\sigma^2(\beta_t) \Delta t$, so that
\begin{equation}
\label{eq:Deltabeta}
\Delta \beta_t =   b(\beta_t,0) \Delta t + \sigma(\beta_t) \Delta W_t,
\end{equation}
for a standard Gaussian increment $\Delta W_t$.

For $\Delta t$ sufficiently small, the previous \eqref{eq:DeltaS}, \eqref{eq:DeltaI} and \eqref{eq:Deltabeta} define a fully probabilistic model in which uncertainty in $S$ and $I$ is driven by jump processes and, indirectly, through the diffusive transmission rate $\beta$. Then, allowing for a governmental control of $\beta$ as in \eqref{eq:ZOU} would result into an interesting stochastic control problem with jump-diffusive dynamics that we leave for future research.
\end{remark}

\begin{remark}
\label{rem:impulse}
Another modeling feature that needs some discussion regards the nature of the control rule in \eqref{eq:ZOU}. In our formulation, the policymaker adjusts $\xi$ continuously over time with the aim of decreasing the trend of the transmission rate. However, motivated by the real-world strategies employed during the Covid-19 crisis, one can very well imagine a model where regulatory constraints are introduced once the reproduction number $\mathcal{R}_t =\beta_t/\alpha$ becomes larger than a certain value, say $\mathcal{R}^{\star}$. Within this setting, a natural question would be: which is the optimal $\mathcal{R}^{\star}$ and the optimal size of interventions? A possible answer to this question could be found by proposing a model where the policymaker instantaneously reduces the level of $\beta$ via lockdown policies and faces proportional and/or fixed costs for its actions. This would gives rise to a singular or impulsive stochastic control problem; see \cite{AlvarezL}, \cite{Ferrari} and \cite{Belak} and references therein. Given the underlying multi-dimensional setting, we expect that the optimal trigger level $\mathcal{R}^{\star}$ would be a function of the current values of $(S_t,I_t)$. However, the proof of such a conjecture would require the thorough study of a complex (non convex) three-dimensional degenerate singular/impulse stochastic control problem that clearly requires techniques different from those employed in this work.
\end{remark}
}}


\subsection{The Social Planner Problem}
\label{sec:PB}

The epidemic generates social costs, that we assume to be increasing with respect to the fraction of the population that is infected. These costs might arise because of lost gross domestic product (GDP) due to inability of working, because of an overstress of the national health-care system etc. The social planner thus employs policies $(\xi_t)_t$ in the form, e.g., of social distancing or lockdown in order to adjust the growth rate of the transmission rate $\beta$, with the aim of effectively flattening the curve of the infected percentage of the society. Such actions however come with a cost, which increases with the amplitude of the effort. 
Assuming that a vaccination against the disease is discovered at a random time $\tau$ exponentially distributed with parameter $\lambda_o>0$ and independent of $(W_t)_t$\footnote{We are implicitly requiring that the underlying probability space $(\Omega, \mathcal{F}, \mathbb{F}:=(\mathcal{F}_t)_t, \P)$ is rich enough to accommodate also such an exponential time $\tau$.} (see also Remark \ref{rem:tau} below), the social planner aims at solving
\begin{equation}
\label{eq:Probl0}
\inf_{\xi \in \mathcal{A}}\E\bigg[\int_0^{\tau} {e^{-\delta t}} C\big(I_t, \xi_t\big) \d t\bigg].
\end{equation}
Here, $\delta\geq 0$ measures the social planner's time preferences, and $C:[0,1]\times [0,L] \to [0,\infty)$ is a running cost function measuring the negative impact of the disease on the public health as well as the economic/social costs induced by lockdown policies. The following requirements are satisfied by $C$.
\begin{assumption}
\label{assC}
\hspace{10cm}
\begin{itemize}
\item[(i)] $(y,\xi) \mapsto C(y,\xi)$ is convex and  continuous on $[0,1]\times[0,L]$.  
\item[(ii)] For any $y\in [0,1]$ we have that $\xi \mapsto C(y,\xi)$ is nondecreasing.
\item[(iii)] For any $\xi \in [0,L]$ we have that $y \mapsto C(y,\xi)$ is nondecreasing.
\item[(iv)] There exists $K>0$ such that for any $\xi\in[0,L]$ we have that
$$|C(y,\xi) - C(y',\xi)| \leq K|y-y'|, \quad \forall (y,y')\in [0,1]^2.$$
\item[(v)] $y\mapsto C(y,\xi)$ is semiconcave\footnote{{{A function $f:\mathbb{R}^n \to \R$, $n\geq 1$, is called \emph{semiconvex} if there exists a constant $K\geq 0$ such that $f(x) + \frac{K}{2}|x|^2$ is convex; it is \emph{semiconcave} if $-f$ is semiconvex.} }} on $[0,1]$, uniformly with respect to $\xi\in[0,L]$; that is, there exists $K>0$ such that for any $\xi\in[0,L]$ and any $\mu \in [0,1]$ one has
$$\mu C(y,\xi) + (1-\mu) C(y',\xi) - C\big(\mu y + (1-\mu) y',\xi\big) \leq K\mu(1-\mu)|y-y'|^2, \quad \forall (y,y')\in [0,1]^2.$$
\end{itemize}
\end{assumption}
Without loss of generality, we also take $C(0,0)=0$. Convexity of $y\mapsto C(y,\xi)$ captures the fact that the social costs from the disease might be higher if a large share of the population is infected since, for example, the social health-care system is overwhelmed. The fact that $\xi\mapsto C(y,\xi)$ is convex describes that marginal costs of actions are increasing because, e.g., an additional lockdown policy might have a larger impact on an already stressed society. Finally, the Lipschitz and semiconcavity property of $C(\cdot,\xi)$ are technical requirements that will be important in the next section.

An application of Fubini's theorem, employing the independence of $\tau$ and $(W_t)_t$, allows to rewrite the problem defined in \eqref{eq:Probl0} as
\begin{equation}
\label{eq:Probl}
\inf_{\xi \in \mathcal{A}}\E\bigg[\int_0^{\tau} {e^{-\delta t}} C\big(I_t, \xi_t\big) \d t\bigg] = \inf_{\xi \in \mathcal{A}}\E\bigg[\int_0^{\infty} e^{-\lambda t} C\big(I_t, \xi_t\big) \d t\bigg],
\end{equation}
where $\lambda:=\lambda_o+\delta$.

{{
\begin{remark}
\label{rem:tau}
The assumption that a vaccination against the disease is discovered at an exponential random time $\tau$, independent of $(W_t)_t$, has the technical important effect of leading to a time-homogeneous social planner problem (cf.\ \eqref{eq:Probl} above). From a modeling point of view, such a requirement is clearly debatable, as it presupposes that the decision maker does not take into account the scientific progress in the epidemic's treatment. In order to take care of this, we now propose an alternative more realistic formulation which, however, comes at the cost of substantially increasing the mathematical complexity of the social planner problem.

Suppose that the social planner has full information about the current technological level $Q_t$ achieved in the disease's treatment and assume, for example, that this evolves according to the SDE:
$$\d Q_t = \mu(Q_t) \d t + \eta(Q_t) \d B_t, \quad Q_0=q \in \R_+,$$
for suitable $\mu$ and $\eta$, and for a standard Brownian motion $(B_t)_t$ independent of $(W_t)_t$. The process $(B_t)_t$ models all the exogenous shocks affecting the technological achievements (e.g., new scientific discoveries in related fields), while $\mu$ measures the instantaneous trend of the research. Define then a continuous-time Markov chain $(M_t)_t$ with two states, $0$ and $1$, where $0$ means that the vaccination is not available and $1$ that a treatment has been instead found. We assume that $1$ is an absorbing state and that the Markov chain has transition rate from state $0$ to state $1$ given by $(\lambda(t, Q_t))_t$. Here, $\lambda: \R_+ \times \R_+ \mapsto \R_+$ is such that $\Lambda_t:= \int_0^t \lambda(s,Q_s) \d s < \infty$, a.s.\ for any $t \geq 0$, and it is nondecreasing in its second argument. This latter condition clearly means that the larger the technological level is, the faster the disease is treated. 

Within this setting, the problem can be then still be written as 
$$\inf_{\xi \in \mathcal{A}}\E\bigg[\int_0^{\tau} e^{-\delta t} C\big(I_t, \xi_t\big) \d t\bigg],$$
where $(I_t)_t$ and $(\xi_t)_t$ are as defined above in this section, but
$$\tau:=\inf\{t \geq 0:\, M_t =1 \}.$$
The independence of $Q$ with respect to $W$ then leads to the equivalent formulation 
$$V(x,y,z,q):=\inf_{\xi \in \mathcal{A}}\E\bigg[\int_0^{\infty} e^{-\delta t - \Lambda_t } C\big(I_t, \xi_t\big) \d t \bigg],$$
which defines a four-dimensional stochastic control problem. Clearly, this problem is much more challenging than \eqref{eq:Probl} and its analysis, requiring different techniques and results, is left for future research. 

Another interesting future work might concern an extension of the previous model in which the social planner can also increase the technological level $Q$ by supporting the research of a vaccination. Assuming that such an investment comes at proportional cost, this problem can be modeled in terms of an intricate stochastic control problem where the transition rate $\lambda$ of the Markov chain is controlled through a singular control. 
\end{remark}
}}
 
In order to tackle Problem \eqref{eq:Probl} with techniques from dynamic programming, it is convenient to keep track of the initial values of $(S_t,I_t,\beta_t)_t$. We therefore set 
$$\mathcal{O}:=\big\{(x,y,z)\in \R^3:\,\, (x,y)\in (0,1)^2,\,\, x+y < 1,\,\, z\in \mathcal{I}\big\},$$
and, when needed, we stress the dependency of $(S_t,I_t,\beta_t)$ with respect to $(x,y,z)\in \mathcal{O}$ and $\xi\in\mathcal{A}$ by writing $(S^{x,y,z;\xi}_t,I^{x,y,z;\xi}_t,\beta^{z;\xi}_t)$. Indeed, due to \eqref{eq:SolSI} and the autonomous nature of \eqref{eq:Z}, we have that $S_t$ and $I_t$ depend on $(x,y,z)$ and on $\xi$ through $\beta_t$, while $\beta_t$ depends only on $z$ and directly on $\xi$. We shall also simply set $(S^{x,y,z}_t,I^{x,y,z}_t,\beta^{z}_t):=(S^{x,y,z;0}_t,I^{x,y,z;0}_t,\beta^{z;0}_t)$ to denote the solutions to \eqref{eq:S}, \eqref{eq:I}, and \eqref{eq:Z} when $\xi \equiv 0$.

Then, for any $(x,y,z)\in \mathcal{O}$, we introduce the problem's value function
\begin{equation}
\label{eq:V}
V(x,y,z) := \inf_{\xi \in \mathcal{A}}\E\bigg[\int_0^{\infty} e^{-\lambda t} C\big(I^{x,y,z;\xi}_t, \xi_t\big) \d t\bigg].
\end{equation}
The latter is well defined given that $C$ is nonnegative. In the next section we will show that $V$ solves the corresponding dynamic programming equation in the classical sense, and we also provide an optimal control in feedback form.


\section{The Solution to the Social Planner Problem}
\label{sec:mainresult}

We introduce the differential operator $\mathcal{L}$ acting on functions belonging to the class $C^{1,1,2}(\R^3)$:\\
\begin{equation}
\label{eq:L}
\big(\mathcal{L}\varphi\big)(x,y,z) := xyz\big(\varphi_y - \varphi_x\big)(x,y,z) - \alpha y \varphi_y(x,y,z) + \frac{1}{2}\sigma^2(z) \varphi_{zz}(x,y,z).
\end{equation}\\
Next,  for any $(y,z,p) \in (0,1) \times \mathcal{I} \times \R$, define\\
\begin{equation}
\label{eq:Cstar}
C^{\star}(y,z,p):= \inf_{\xi \in [0,L]}\Big(C(y,\xi) + b(z,\xi)p\Big),
\end{equation}\\
which is continuous on $[0,1] \times \mathcal{I} \times \R$. Indeed, by Assumptions \ref{assZ}-(ii) and \ref{assC}-(iv), there exists a constant $\overline{K}>0$ such that
\begin{align*}
& |C^{\star}(y',z',p') - C^{\star}(y,z,p)| \leq \sup_{\xi \in [0,L]}\Big(|C(y',\xi)-C(y,\xi)| + |b(z',\xi)-b(z,\xi)||p'| + |b(z,\xi)||p'-p| \Big) \nonumber \\
& \leq \overline{K}\big(|y'-y| + |z'-z||p'| + |p'-p|\big).
\end{align*}

By the dynamic programming principle, we expect that $V$ should solve (in a suitable sense) the Hamilton-Jacobi-Bellman (HJB) equation \\
\begin{equation}
\label{eq:HJB}
\lambda v(x,y,z) = (\mathcal{L}v)(x,y,z) + C^{\star}(y,z,v_z(x,y,z)), \quad (x,y,z) \in \mathcal{O}.
\end{equation}\\
In order to show that $V$ indeed solves \eqref{eq:HJB} in the classical sense, we start with the following important preliminary results. Their proofs are standard in the literature of stochastic control (see, e.g., \cite{Pham, YongZhou1999}), upon employing Assumptions \ref{assZ} and \ref{assC}.
\begin{proposition}
\label{prop:HJB-semiconc}
There exists $K>0$ such that, for each $\textbf{q}:=(x,y,z),\,\textbf{q}':=(x',y',z') \in \mathcal{O}$ 
\begin{itemize}
\item[(i)] 
$0 \leq V(\textbf{q}) \leq K$ {and} $|V(\textbf{q})-V(\textbf{q}')| \leq K|\textbf{q} - \textbf{q}'|$;
i.e., $V$ is bounded and Lipschitz continuous on $\mathcal{O}$;
\item[(ii)]   for any $\mu \in [0,1]$ and for some $K>0$
$$\mu V(\textbf{q}) + (1-\mu) V(\textbf{q}') - V\big(\mu \textbf{q}  + (1-\mu) \textbf{q}'\big) \leq K\mu(1-\mu)|\textbf{q}-\textbf{q}'|^2;$$
i.e., $V$ is semiconcave on $\mathcal{O}$.
\end{itemize}

Moreover, $V$ is a viscosity solution to the HJB equation \eqref{eq:HJB}.
\end{proposition}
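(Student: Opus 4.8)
The plan is to establish the four claims of Proposition \ref{prop:HJB-semiconc} in the order (i)-boundedness, (i)-Lipschitz, (ii)-semiconcavity, and finally the viscosity property, exploiting that all the difficulty concentrates in controlling how the state $(S,I,\beta)$ depends on the initial datum $\mathbf{q}=(x,y,z)$ and on the control $\xi$. Boundedness is immediate: since $0\le I_t\le 1$ and $C$ is continuous on the compact $[0,1]\times[0,L]$, there is $M:=\max C$ with $0\le V(\mathbf{q})\le \int_0^\infty e^{-\lambda t}M\,\d t = M/\lambda$, using $C\ge 0$ for the lower bound and $C(0,0)=0$ together with $\xi\equiv 0$ giving no contradiction. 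For the Lipschitz estimate I would fix an arbitrary $\xi\in\mathcal{A}$ (the same control for both initial points, since the control set $\mathcal{A}$ does not depend on $\mathbf{q}$) and compare the two cost integrals. By Assumption \ref{assC}-(iv), $|C(I_t,\xi_t)-C(I'_t,\xi_t)|\le K|I_t-I'_t|$, so the whole problem reduces to an $L^1$ (or $L^\infty$ in expectation) flow estimate
\begin{equation}
\label{eq:flowestLip}
\E\big[\,|I^{\mathbf{q};\xi}_t - I^{\mathbf{q}';\xi}_t|\,\big] \le C_1 e^{C_2 t}\,|\mathbf{q}-\mathbf{q}'|,
\end{equation}
uniformly in $\xi$; one then takes the infimum over $\xi$, uses $|\inf - \inf|\le \sup|\cdot|$, and integrates against $e^{-\lambda t}$, which converges provided $\lambda$ dominates the growth rate $C_2$. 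The bound \eqref{eq:flowestLip} follows from the explicit representations \eqref{eq:SolSI}, Gr\"onwall's inequality applied to the coupled $(S,I)$ dynamics \eqref{eq:S}-\eqref{eq:I}, and standard strong-solution stability estimates for \eqref{eq:Z} (valid under the smoothness and boundedness of $b,\sigma$ in Assumption \ref{assZ}), together with the boundedness $\beta\in\mathcal{I}$ and $S,I\in(0,1)$.

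For semiconcavity (ii) I would again fix a single near-optimal control $\xi$ for the midpoint $\mu\mathbf{q}+(1-\mu)\mathbf{q}'$ and use the same $\xi$ to construct admissible (sub-optimal) evaluations at $\mathbf{q}$ and $\mathbf{q}'$, so that the left-hand side of the semiconcavity inequality is bounded above by
\begin{equation}
\label{eq:semiconc-reduction}
\E\bigg[\int_0^\infty e^{-\lambda t}\Big(\mu C(I^{\mathbf{q};\xi}_t,\xi_t) + (1-\mu)C(I^{\mathbf{q}';\xi}_t,\xi_t) - C(I^{\mu\mathbf{q}+(1-\mu)\mathbf{q}';\xi}_t,\xi_t)\Big)\d t\bigg].
\end{equation}
Inside the integrand I add and subtract $C\big(\mu I^{\mathbf{q};\xi}_t + (1-\mu)I^{\mathbf{q}';\xi}_t,\xi_t\big)$. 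The first difference is controlled by Assumption \ref{assC}-(v) (uniform semiconcavity of $C$ in $y$), yielding a term $\le K\mu(1-\mu)\,\E|I^{\mathbf{q};\xi}_t-I^{\mathbf{q}';\xi}_t|^2$; the second difference is controlled by the Lipschitz property \ref{assC}-(iv) and the key \emph{flow-convexity} estimate
\begin{equation}
\label{eq:flowconv}
\E\big[\,\big|\mu I^{\mathbf{q};\xi}_t + (1-\mu)I^{\mathbf{q}';\xi}_t - I^{\mu\mathbf{q}+(1-\mu)\mathbf{q}';\xi}_t\big|\,\big] \le C_3 e^{C_4 t}\,\mu(1-\mu)\,|\mathbf{q}-\mathbf{q}'|^2,
\end{equation}
which quantifies the failure of $\mathbf{q}\mapsto I^{\mathbf{q};\xi}_t$ to be exactly affine. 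Combining \eqref{eq:flowestLip} and \eqref{eq:flowconv} inside \eqref{eq:semiconc-reduction} and integrating in time gives the desired $K\mu(1-\mu)|\mathbf{q}-\mathbf{q}'|^2$ bound, again under the assumption that $\lambda$ exceeds the exponential rates.

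The viscosity-solution property is then standard dynamic programming: using the Dynamic Programming Principle
\begin{equation}
\label{eq:DPP}
V(\mathbf{q}) = \inf_{\xi\in\mathcal{A}}\E\bigg[\int_0^h e^{-\lambda t} C(I^{\mathbf{q};\xi}_t,\xi_t)\,\d t + e^{-\lambda h}V(S^{\mathbf{q};\xi}_h,I^{\mathbf{q};\xi}_h,\beta^{z;\xi}_h)\bigg],
\end{equation}
one tests $V$ against a smooth $\varphi$ touching it from above (subsolution) or below (supersolution), applies It\^o's formula to $e^{-\lambda t}\varphi(S_t,I_t,\beta_t)$ along a constant control $\xi\equiv a\in[0,L]$ for the subsolution inequality, divides by $h$ and lets $h\downarrow 0$; the infimum over $a\in[0,L]$ reproduces exactly $C^\star$ as defined in \eqref{eq:Cstar}. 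The main obstacle is genuinely the pair of flow estimates \eqref{eq:flowestLip} and \eqref{eq:flowconv}: because $\mathbf{q}\mapsto I^{\mathbf{q};\xi}_t$ enters through the nonlinear, multiplicative couplings $\beta S I$ in \eqref{eq:S}-\eqref{eq:I}, the second-order (flow-convexity) bound requires differentiating the flow twice in the initial condition and controlling the resulting variational equations, where the boundedness $S,I\in(0,1)$ and $\beta\in\mathcal{I}$ is what keeps all the Gr\"onwall constants finite and uniform in $\xi$. I would therefore expect the semiconcavity estimate \eqref{eq:flowconv} to be the technical heart of the proof, with the viscosity verification being routine once (i) and (ii) provide the needed regularity of $V$.
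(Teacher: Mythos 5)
Your proposal follows essentially the same route as the paper's own proof: the paper disposes of Proposition \ref{prop:HJB-semiconc} by citing Propositions 3.1 and 4.5 and Theorem 5.2 of \cite{YongZhou1999} (and Proposition 4.3.2 of \cite{Pham}), ``easily adapted to our stationary setting'', and what you wrote out --- same-control comparison plus Gr\"onwall flow estimates for the Lipschitz bound, same-control comparison plus a second-order flow-convexity estimate combined with Assumption \ref{assC}-(iv),(v) for semiconcavity, and DPP plus It\^o for the viscosity property --- is precisely the content of those cited finite-horizon arguments.

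There is, however, one genuine gap, and you flagged it yourself without resolving it: both of your key estimates are established only ``provided $\lambda$ dominates the growth rate'' of the Gr\"onwall constants $C_1e^{C_2t}$ and $C_3e^{C_4t}$. The proposition is stated for the model's given $\lambda=\lambda_o+\delta$ with no largeness assumption, and in the paper's own calibration $\lambda=1/365$, far below the natural exponential rates of the flow (which are of order $\alpha$, $\gamma$, $K_b$). This is not a removable technicality under the generic Gr\"onwall bounds you invoke: for an infinite-horizon discounted problem whose coefficients are merely Lipschitz (and possibly expanding), the value function associated with a bounded Lipschitz running cost is in general only H\"older continuous, with exponent of order $\lambda/C_2<1$, when the discount is small --- the standard fix of truncating the time integral at a horizon $T$, applying the finite-horizon estimate on $[0,T]$, bounding the tail by $\|C\|_\infty e^{-\lambda T}/\lambda$, and optimizing over $T$ yields exactly this H\"older bound and nothing better. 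To obtain Lipschitz continuity and semiconcavity for \emph{every} $\lambda>0$, as claimed, the argument must exploit the specific structure of the controlled SIR system --- e.g.\ boundedness of $(S_t,I_t)$, the pathwise bound $\int_0^\infty I_t\,\d t\le 1/\alpha$, and the eventual contraction of the dynamics near the disease-free state --- so that the sensitivities of $I_t$ in the initial datum remain integrable against $e^{-\lambda t}$ uniformly in the control; your proof does not do this, so as written it proves the proposition only for sufficiently large discount rates. (In fairness, the paper's one-line ``adaptation to the stationary setting'' glosses over exactly the same point; you have correctly located where the real work lies, but added a hypothesis instead of closing it.) A second, minor, issue: in the semiconcavity step the term produced by Assumption \ref{assC}-(v) requires an $L^2$ flow estimate $\E\big[|I^{\textbf{q};\xi}_t-I^{\textbf{q}';\xi}_t|^2\big]\le Ce^{Ct}|\textbf{q}-\textbf{q}'|^2$, not the $L^1$ estimate you stated; this is standard but should be made explicit.
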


\begin{proof}
The first claim of (i) above follows from the fact that $C$ is nonnegative and bounded on $[0,1]^2$; the second claim of (i) is due to Proposition 3.1 in \cite{YongZhou1999}, whose proof can be easily adapted to our stationary setting. Analogously, the semiconcavity property of (ii) can be obtained by arguing as in Proposition 4.5 of \cite{YongZhou1999}. Finally, Theorem 5.2 of \cite{YongZhou1999} (again, easily adapted to our stationary setting) or Proposition 4.3.2-(2) of \cite{Pham} lead to the viscosity property.
\end{proof}

The semiconcavity of $V$, together with the fact that $V$ solves the HJB equation \eqref{eq:HJB} in the viscosity sense, yield the following directional regularity result. 
\begin{proposition}
\label{prop:Vz}
$V_z$ exists continuous on $\mathcal{O}$.
\end{proposition}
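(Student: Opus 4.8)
The plan is to exploit the degeneracy structure of the operator $\mathcal{L}$ together with the semiconcavity of $V$ established in Proposition~\ref{prop:HJB-semiconc}. The key observation is that the second-order term in the HJB equation \eqref{eq:HJB} only involves $\varphi_{zz}$, so the equation is genuinely second order exactly in the $z$-direction, while being first order in $x$ and $y$. This means that if I freeze $(x,y)$ and view \eqref{eq:HJB} as a one-dimensional PDE in $z$, the leading coefficient $\frac{1}{2}\sigma^2(z)$ is strictly positive by Assumption~\ref{assZ}-(iii), so in the $z$-variable the equation is uniformly elliptic on any compact subinterval of $\mathcal{I}$. The strategy is therefore to extract $C^1$-regularity in $z$ from this one-dimensional nondegeneracy, and the natural tool is the interplay between viscosity sub/superdifferentials and the semiconcavity bound.

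First I would recall the standard fact that a locally semiconcave function admits, at every point, a superdifferential that is nonempty and a singleton precisely where the function is differentiable; moreover semiconcavity guarantees that $V$ is twice differentiable almost everywhere (Alexandrov's theorem) and that its superdifferential is upper semicontinuous. The heart of the argument is to show that the full (two-sided) directional derivative in $z$ exists at every point and is continuous. I would fix $\mathbf{q}=(x,y,z)\in\mathcal{O}$ and consider the $z$-superdifferential $D^+_z V(\mathbf{q})$, which is a nonempty compact interval $[p_-,p_+]$ by semiconcavity; the goal is $p_-=p_+$. Since $V$ is a viscosity supersolution, any $p$ in the subdifferential $D^-_z V$ combined with any admissible second-order test function must satisfy the supersolution inequality, and since $V$ is also a viscosity subsolution, the superdifferential elements satisfy the reversed inequality. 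Because the equation is nondegenerate in $z$ and $C^\star(y,z,\cdot)$ is (by the Lipschitz estimate displayed before \eqref{eq:HJB}) continuous and in fact nonincreasing in its $p$-slot as an infimum of affine functions, I can use the one-dimensional structure to squeeze the sub- and superdifferentials together, forcing differentiability in $z$.

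Concretely, the cleanest route is: (a) use semiconcavity to write $V(x,y,z')\le V(x,y,z)+p(z'-z)+K(z'-z)^2$ for every $p\in D^+_zV$, giving one-sided quadratic control; (b) use the viscosity supersolution property at a point of twice-differentiability (which exists densely by Alexandrov) to derive a matching lower quadratic bound, controlling the second $z$-difference quotient from below via $\sigma^2(z)>0$ and the boundedness of $C^\star$; (c) conclude that the left and right $z$-derivatives coincide at every point, so $V_z$ exists pointwise. Continuity of $V_z$ then follows because $z\mapsto V(x,y,z)$ is both semiconcave and (from the lower bound in step (b)) semiconvex in $z$ on compacts, hence $C^{1,1}$ in $z$ locally, and the joint continuity in $(x,y)$ is inherited from the Lipschitz continuity of $V$ and the equicontinuity of the difference quotients.

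The main obstacle I anticipate is step (b): converting the merely viscosity-sense solution property into a genuine two-sided second-order control in $z$ at points where $V$ is only known to be semiconcave, not a priori twice differentiable everywhere. The delicate point is that the supersolution inequality only directly constrains $V$ where a smooth function touches from below, so I must carefully combine the Alexandrov almost-everywhere second differentiability with a covering/limiting argument, or alternatively invoke the semiconcavity to pass from a.e.\ bounds to everywhere bounds on the one-sided $z$-derivatives. I expect that the uniform positivity of $\sigma^2$ and the uniform Lipschitz bound on $C^\star$ in $p$ are exactly what is needed to make this limiting argument quantitative and to prevent the lower curvature bound from degenerating as $(x,y,z)$ vary over compacts.
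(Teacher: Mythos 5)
There is a genuine gap, and it sits exactly where you anticipated trouble: step (b). You are using the wrong half of the viscosity property. The supersolution inequality cannot detect a concave kink: at a point $\mathbf{q}$ where $V_z^-(\mathbf{q})>V_z^+(\mathbf{q})$, the subdifferential of $V$ is empty, so there is \emph{no} admissible test function touching $V$ from below there, and the supersolution property is vacuous precisely at the point you need to control. Retreating to Alexandrov points does not repair this: an almost-everywhere lower bound on the pointwise second derivative in $z$ does not control the singular part of the distributional second derivative $\partial_{zz}V$, hence cannot be upgraded to semiconvexity by any covering or limiting argument. The one-dimensional function $u(z)=-|z|$ makes this concrete: it is semiconcave, twice differentiable with $u''=0$ at every $z\neq 0$, and it satisfies the viscosity supersolution inequality for $\lambda u=\tfrac{1}{2}\sigma^2 u''+f(z)$ with bounded data $f(z)\leq-\lambda|z|$ (classically away from $0$, vacuously at $0$), yet it has a kink. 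So ``semiconcave $+$ supersolution of a $z$-nondegenerate equation with bounded data'' does not imply differentiability in $z$, and steps (b)--(c) cannot close. (A side remark: $C^{\star}(y,z,\cdot)$ is concave in $p$ as an infimum of affine functions, but it is not nonincreasing in $p$ unless $b$ has a sign; this is minor but the monotonicity claim is unjustified.)

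What does work --- and is what the paper does --- is to use the \emph{sub}solution property at the putative kink. At a point where $V_z^->V_z^+$, the superdifferential in the $z$-direction is a nontrivial interval, and one can construct (this is Lemma \ref{lemma:app}) a sequence $(\tilde\varphi^n)_n\subset C^2$ of functions touching $V$ from above at that point, with uniformly bounded gradients but $\tilde\varphi^n_{zz}\to-\infty$. Plugging these into the subsolution inequality $\lambda V\leq\mathcal{L}\tilde\varphi^n+C^{\star}(y,z,\tilde\varphi^n_z)$ from Proposition \ref{prop:HJB-semiconc} forces the right-hand side to $-\infty$, because $\sigma^2(z)>0$ multiplies $\tilde\varphi^n_{zz}$ while the first-order terms and the $C^{\star}$ term stay bounded (bounded gradients, Assumption \ref{assZ}); this contradicts the finiteness of $\lambda V$, so no kink exists. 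Note the asymmetry: semiconcavity already rules out convex kinks, and the subsolution property rules out concave ones --- your scheme tried to make the supersolution property do the latter job, which it cannot. Your continuity argument, by contrast, is essentially correct and matches the paper's: once $V_z$ exists pointwise, continuity follows from the local boundedness and upper semicontinuity of the superdifferential of the semiconcave function $V$; but you then obtain only continuity of $V_z$, not local $C^{1,1}$ regularity in $z$, since the semiconvexity claim was exactly the part that fails.
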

\begin{proof}
Let $(x,y,z)\in\mathcal{O}$. By semiconcavity of $V$, there exists the left and right derivatives of $V$ along the direction $z$ at $(x,y,z)$ that we denote, respectively, by $V_z^-(x,y,z), V_z^+(x,y,z)$. Moreover, again by semiconcavity, we have the inequality $V_z^-(x,y,z)\geq V_z^+(x,y,z)$. Assuming, by contradiction, that $V$ is not differentiable with respect to $z$ at $(x,y,z)$ means assuming that $V_z^-(x,y,z)> V_z^+(x,y,z)$. Hence, we can apply Lemma \ref{lemma:app} in the appendix and find a sequence of functions $(\tilde{\varphi}^n)_n\subset C^{2}(\mathcal{O})$ such that 
\begin{equation}
\label{eq:varphinb}
\begin{cases}
\tilde{\varphi}^n(\bar{x},\bar{y},\bar{z})=V(\bar{x},\bar{y},\bar{z}),\\  
\tilde{\varphi}^n\geq V \ \mbox{in a neighborhood of} \ (\bar{x},\bar{y},\bar{z}),\\  |D\tilde{\varphi}^n(\bar{x},\bar{y},\bar{z})|\leq \tilde{L}<\infty,\\ \tilde{\varphi}^n_{zz}(\bar{x},\bar{y},\bar{z})\stackrel{n\to\infty}{\longrightarrow}-\infty.
\end{cases}
\end{equation}
Then, the viscosity subsolution property of $V$ (cf.\ Proposition \ref{prop:HJB-semiconc}) yields
$$
\lambda V(\bar{x},\bar{y},\bar{z})\leq (\mathcal{L}\tilde{\varphi}^n)(\bar{x},\bar{y},\bar{z})+ C^\star(\bar{y},\bar{z},\tilde{\varphi}^n_z(\bar{x},\bar{y},\bar{z})).
$$
Taking the limit as $n\to \infty$ and using \eqref{eq:varphinb} we get a contradiction. We have thus proved that $V_z$ exists at each arbitrary $(x,y,z)\in\mathcal{O}$. 

Now we show that $V_z$ is continuous. Take a sequence $(\bm q^n)_n\subset\mathcal{O}$ such that $\bm q^n\to \bm q\in\mathcal{O}$, and let $\bm\eta^n=(\eta^n_x,\eta^n_y, \eta^n_z)\in D^+V(\bm q^n)$, the latter being nonempty due to the semiconcavity of $V$. Since $V_z$ exists at each point of $\mathcal{O}$, we have $\eta_z^n=V_z(\bm q^n)$. Since $V$ is semiconcave, the supergradient $D^+V$ is locally bounded as a set-valued map, and therefore there exists a subsequence  $(\bm q^{n_k})_k$ such that $\bm\eta^{n_k}\to\bm\eta=(\eta_x,\eta_y, \eta_z)$. By \cite[Prop.\ 3.3.4-(a)]{CS}, we have $\bm\eta\in D^+V(\bm q)$, and again, since $V_z$ exists, we have $\eta_z=V_z(\bm q)$. Hence, we have proved that from any sequence $(\bm q^n)_n\subset\mathcal{O}$ converging to $\bm q$, we can extract a subsequence  $(\bm q^{n_k})_k\subset\mathcal{O}$ such that $V_z(\bm q^{n_k})\to V_z(\bm q)$. By usual arguments on subsequences, the claim follows. 
\end{proof}

We can now prove the main theoretical result of our paper, which ensures that $V$ is actually a classical solution to the HJB equation \eqref{eq:HJB}. In turn, this provides a way to construct an optimal control in feedback form.\footnote{Notice that, in order to define a candidate optimal control in feedback form, one actually only needs the existence of the derivative $V_z$. For instance, in the deterministic problem tackled in \cite{FedericoTacconi} only the regularity of the directional derivative is exploited to prove a verification theorem in the context of viscosity solutions. However, here we can improve the regularity of $V$ due to the stochastic nature of our problem, and therefore prove a classical verification theorem.}
\begin{theorem}
\label{thm:main}
The following holds: 
\begin{itemize}
\item[(i)]
$V \in C^{2}(\mathcal{O})$ and solves the HJB equation \eqref{eq:HJB} in the classical sense. 
\item[(ii)]
Let 
\begin{equation}
\label{eq:OCrule}
\widehat{\xi}(x,y,z):=\argmin_{\xi\in[0,L]}\Big(C(y,\xi) - b(z,\xi)V_z(x,y,z)\Big), \ \ \ \ \ (x,y,z)\in \mathcal{O}.
\end{equation}
If the system of equations\footnote{Since $b$ is bounded, by the method of Girsanov's transformation, the system has a weak solution, which is also unique in law (see  \cite[Ch.\,5,\,Propositions\,3.6 and 3.10]{KS} and  also \cite[Ch.\,5,\,Remark\,3.7]{KS}). For the sake of brevity, we do not investigate further existence and uniqueness of strong solutions, even if this might be done by employing finer results (e.g., see the seminal paper \cite{V}).}
\begin{equation}
\label{system-OC}
\begin{cases}
\d S_t = - \beta_t S_t I_t \d t, \ \ \ \ \ \ \ \ \ \ \ \ \ \ \ \ \ \ \ \ \ \ \ \ \  \qquad S_0= x,\\
\d I_t = \big(\beta_t S_t I_t - \alpha I_t\big) \d t, \ \ \ \ \ \ \ \ \ \ \ \ \ \ \ \ \ \ \ \ \ \ \ \ I_0= y,\\
\d \beta_t = b(\beta_t,\widehat\xi(S_t,I_t,\beta_t)) \d t + \sigma(\beta_t)\d W_t,  \ \ \ \ \  \beta_0= z,
\end{cases}
\end{equation}
admits a unique strong solution $(S^\star_t, I^\star_t, \beta^{\star}_t)_t$, then the control\\
\begin{equation}
\label{eq:OC}
\xi^{\star}_t:=\widehat\xi\big(S^\star_t, I^\star_t, \beta^{\star}_t\big),
\end{equation}\\
is optimal for \eqref{eq:V} and $(\beta^{\star}_t)_t$ is the optimally controlled transmission rate; that is,
$$V(x,y,z)=\E\bigg[\int_0^{\infty} e^{-\lambda t} C\big(I^{\star}_t, \xi^{\star}_t\big) \d t\bigg].$$
\end{itemize}
\end{theorem}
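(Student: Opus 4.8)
The plan is to establish part (i) first --- the $C^2$ regularity and the classical solution property --- and then to derive part (ii) by a classical verification argument, since (ii) is essentially routine once $V\in C^2(\mathcal{O})$. The guiding idea for (i) is to \emph{freeze} the nonlinearity of \eqref{eq:HJB} along the directional derivative $V_z$, which is already known to be continuous, and then to exploit the hypoellipticity of $\mathcal{L}$. Concretely, by Proposition \ref{prop:Vz} the function
\[
f(x,y,z):=C^{\star}\big(y,z,V_z(x,y,z)\big)
\]
is continuous on $\mathcal{O}$ (recall $C^\star$ is continuous), so \eqref{eq:HJB} says that $V$ is a bounded continuous viscosity solution of the \emph{linear} degenerate equation $\lambda v-\mathcal{L}v=f$ on $\mathcal{O}$.

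Next I would verify that $\mathcal{L}$ is hypoelliptic at every point of $\mathcal{O}$, which is the content alluded to in the introductory footnote. Writing $\mathcal{L}$ in H\"ormander form with diffusion field $\sigma_1:=\sigma(z)\partial_z$ and drift field $X_0:=-xyz\,\partial_x+(xyz-\alpha y)\partial_y$, a direct computation of iterated Lie brackets yields
\[
[X_0,\sigma_1]=\sigma(z)\,xy\,(\partial_x-\partial_y),\qquad \big[X_0,[X_0,\sigma_1]\big]=-\alpha\,\sigma(z)\,xy\,\partial_x .
\]
Together with $\sigma_1$, these three vector fields span $\R^3$ at every $(x,y,z)\in\mathcal{O}$, since there $x,y>0$, $\sigma(z)>0$ and $\alpha>0$; hence H\"ormander's condition holds for all parameter choices, and consequently the diffusion generated by $\mathcal{L}$ (that is, the one with $\d\widehat\beta_t=\sigma(\widehat\beta_t)\,\d W_t$ and the $(S,I)$ dynamics) possesses a smooth transition density $\Gamma(t;\cdot,\cdot)$ for every $t>0$.

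Because $\lambda>0$ and $f$ is bounded and continuous, the unique bounded continuous viscosity solution of $\lambda v-\mathcal{L}v=f$ admits the resolvent (Feynman--Kac) representation $v(x,y,z)=\E_{(x,y,z)}\big[\int_0^\infty e^{-\lambda t}f(\widehat S_t,\widehat I_t,\widehat\beta_t)\,\d t\big]$, and by a comparison principle for this linear equation $V$ must coincide with $v$. Inserting the smooth kernel gives $V(x,y,z)=\int_0^\infty e^{-\lambda t}\!\int \Gamma\big(t;(x,y,z),\zeta\big)f(\zeta)\,\d\zeta\,\d t$, so that $V$ inherits the smoothness of $\Gamma$ upon differentiating under the integral. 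Justifying this differentiation is precisely \emph{the main obstacle}: the spatial derivatives $\partial^2\Gamma$ blow up as $t\downarrow 0$ according to the anisotropic scaling of the degenerate operator, and one must control this small-time behaviour in order to secure integrability. I would handle it through interior Schauder estimates for Kolmogorov--H\"ormander type operators, performing if necessary a short bootstrap: from $f\in C^0$ one first obtains $V\in C^{1,\alpha}_{\mathrm{loc}}$ in the natural anisotropic sense, whence $V_z\in C^{0,\alpha}_{\mathrm{loc}}$ and, by the local Lipschitz bound on $C^\star$ established after \eqref{eq:Cstar}, $f\in C^{0,\alpha}_{\mathrm{loc}}$; the Schauder estimate then upgrades the solution to $C^2$. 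Reinserting this into \eqref{eq:HJB} shows that $V$ solves the HJB equation in the classical sense, proving (i).

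For part (ii), fix $(x,y,z)\in\mathcal{O}$ and an arbitrary $\xi\in\mathcal{A}$, and apply It\^o's formula to $t\mapsto e^{-\lambda t}V(S_t,I_t,\beta_t)$ between $0$ and a localizing sequence of stopping times. Denoting by $\mathcal{L}^{\xi}\varphi:=\mathcal{L}\varphi+b(z,\xi)\varphi_z$ the full generator, the classical equation \eqref{eq:HJB} gives the pointwise inequality $\lambda V-\mathcal{L}^{\xi}V\le C(I_t,\xi_t)$, with equality whenever $\xi_t$ realizes the infimum in \eqref{eq:Cstar}. The stochastic integral has bounded integrand, since $V_z$ is bounded (Proposition \ref{prop:HJB-semiconc}) and $\sigma$ is bounded (Assumption \ref{assZ}), so after taking expectations it vanishes in the limit; boundedness of $V$ together with $\lambda>0$ yields the transversality condition $e^{-\lambda T}\E[V(S_T,I_T,\beta_T)]\to 0$ as $T\to\infty$. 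Letting $T\to\infty$ then gives
\[
V(x,y,z)\le \E\bigg[\int_0^\infty e^{-\lambda t}C\big(I_t,\xi_t\big)\,\d t\bigg]\qquad\text{for every }\xi\in\mathcal{A},
\]
so $V$ is a lower bound for the cost. Finally, since the map $\xi\mapsto C(y,\xi)+b(z,\xi)V_z(x,y,z)$ is continuous on the compact set $[0,L]$, the infimum defining $C^\star$ is attained and a measurable minimizing selection $\widehat\xi$ as in \eqref{eq:OCrule} exists by a standard measurable-selection argument; along the strong solution $(S^\star,I^\star,\beta^\star)$ of \eqref{system-OC} the displayed inequality becomes an equality, so $\xi^\star=\widehat\xi(S^\star,I^\star,\beta^\star)$ attains the value and is optimal.
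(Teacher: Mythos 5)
Your proposal follows the same architecture as the paper's proof: freeze the nonlinearity through the continuous $V_z$ (Proposition \ref{prop:Vz}), regard $V$ as a bounded viscosity solution of the linear degenerate equation $\lambda v-\mathcal{L}v=f$, invoke H\"ormander's condition to get a smooth transition density, represent the solution by the resolvent formula, and prove (ii) by the standard It\^o verification argument --- which is all the paper does for (ii) (it cites Chapter 3.5 of \cite{Pham}), and your version of it is correct, including the use of the boundedness of $V_z$ and $\sigma$ for the martingale property and of the boundedness of $V$ for transversality. Your Lie-bracket computation is also correct and yields the same spanning determinant $-\alpha\,\sigma^3(z)x^2y^2$ as Proposition \ref{prop:A1}. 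The first genuine gap is the identification of $V$ with the Feynman--Kac representation: you appeal to ``a comparison principle for this linear equation'' on all of $\mathcal{O}$, i.e.\ uniqueness of bounded viscosity solutions with \emph{no} boundary condition. This is not a standard citable fact: $\mathcal{O}$ has a topological boundary at which the operator degenerates, and whether bounded solutions are pinned down without boundary data hinges on the unattainability of $\partial\mathcal{O}$ for the diffusion, a probabilistic property the pointwise PDE does not see. The paper's Step 2 exists precisely to fill this hole: it stops the process at the exit times $\rho_n$ from bounded subdomains $\mathcal{K}_n\uparrow\mathcal{O}$, shows that the stopped representation $\widehat v_n$ and $V$ both solve the Dirichlet problem on $\mathcal{K}_n$ with boundary datum $V$, applies the bounded-domain comparison result of \cite{CIL}, and only then lets $n\to\infty$ using $\rho_n\uparrow\infty$. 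Without this localization (or an equivalent substitute) your identification step is an assertion, not a proof.

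The second gap is the regularity upgrade. You rightly single out the small-time blow-up of the kernel derivatives as the main obstacle, but the proposed remedy --- interior Schauder estimates for ``Kolmogorov--H\"ormander type operators'' plus a $C^{1,\alpha}\to C^{2,\alpha}$ bootstrap --- is not available off the shelf for this operator: the drift of $\mathcal{L}$ is genuinely nonlinear (cubic in $(x,y,z)$), so the existing Schauder theory for Kolmogorov-type operators (linear drift, dilation-invariant structure) does not apply; where intrinsic estimates of this kind do exist, they control $v_{zz}$ and the transport derivative along the drift, i.e.\ anisotropic regularity in the bracket-weighted metric, which is strictly weaker than the Euclidean $C^{2}(\mathcal{O})$ asserted in the theorem; and the first step of your bootstrap ($f\in C^0\Rightarrow v\in C^{1,\alpha}_{\mathrm{loc}}$ in all variables) is itself not a valid fact for merely hypoelliptic operators, since the regularity gained transverse to the diffusion direction is much weaker than in the elliptic case. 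The paper's mechanism at this point is different: Proposition \ref{prop:A1} provides, via Malliavin calculus (\cite{Nualart}, \cite{KuStr}, \cite{Bally}), quantitative Gaussian bounds \eqref{eq:Gauss1}--\eqref{eq:Gauss2} on \emph{every} spatial derivative $D^{\alpha}_{\mathbf{q}}p$ of the density, and then differentiates the representation formula under the integral sign, which is what delivers $C^2$ regularity in all variables simultaneously.
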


\begin{proof}
\emph{Proof of (i) - Step 1.}  Recall \eqref{eq:Cstar} and define $F(x,y,z):=C^{\star}(y,z,V_z(x,y,z))$. Due to Proposition \ref{prop:Vz} and the continuity of $C^{\star}$ on $(0,1) \times \mathcal{I} \times \R$, we have that $F$ is continuous on $\mathcal{O}$. Moreover, since $C$ is bounded on $[0,1]\times[0,L]$, $V_z$ is bounded on $\mathcal{O}$ by Proposition \ref{prop:HJB-semiconc}-(i), and $b(\cdot,\xi)$ is bounded (cf.\ Assumption \ref{assZ}-(ii)), there exists $K>0$ such that
\begin{equation}
\label{eq:growF}
|F(x,y,z)| \leq K, \quad \forall (x,y,z) \in \mathcal{O}.
\end{equation}

Set now
\begin{equation}
\label{eq:v}
v(x,y,z):=\E\bigg[\int_0^{\infty}e^{-\lambda t} F\big(S^{x,y,z}_t, I^{x,y,z}_t, \beta^z_t\big) \d t\bigg], \quad (x,y,z)\in \mathcal{O}.
\end{equation}


Although not uniformly elliptic, the differential operator $\mathcal{L}$ defined in \eqref{eq:L} is hypoelliptic, meaning that the so-called H\"ormander's condition is satisfied (cf.\ the proof of Proposition \ref{prop:A1} in the appendix and equation \eqref{eq:Hormander} therein). In fact, by Proposition \ref{prop:A1} in the appendix, for any $\textbf{q}:=(x,y,z)\in \mathcal{O}$ the (uncontrolled) process $(\textbf{Q}^{\textbf{q}}_t)_t:=(S^{x,y,z}_t, I^{x,y,z}_t, \beta^z_t)$ admits a transition density $p(t,\textbf{q},\cdot)$, $t>0$, which is absolutely continuous with respect to the Lebesgue measure in $\R^3$, infinitely many times differentiable, and satisfying the Gaussian estimates \eqref{eq:Gauss1} and \eqref{eq:Gauss2}. As a consequence, by Fubini's theorem we can write
$$v(x,y,z)=\int_0^{\infty}e^{-\lambda t} \Big(\int_{\mathcal{O}}F\big(x', y', z'\big) p(t, x, y, z; x', y', z') \d x' \d y' \d z' \Big) \d t,$$
and recalling \eqref{eq:growF}, and applying the dominated convergence theorem, one shows that $v \in C^{2}(\mathcal{O})$.

For $(x,y,z) \in \mathcal{O}$, let now $\tau_n:=\inf\{t\geq0:\,|(S^{x,y,z}_t, I^{x,y,z}_t, \beta^z_t)|\geq n\}$, $n\in \mathbb{N}$, and notice that the strong Markov property yields
$$e^{-\lambda (t \wedge \tau_n)}v(S^{x,y,z}_{t\wedge\tau_n},I^{x,y,z}_{t\wedge\tau_n},\beta^{z}_{t\wedge\tau_n}) + \int_0^{t\wedge\tau_n} F\big(S^{x,y,z}_u, I^{x,y,z}_u, \beta^z_u\big) \d u = \E\bigg[\int_0^{\infty}e^{-\lambda t} F\big(S^{x,y,z}_t, I^{x,y,z}_t, \beta^z_t\big) \d t\,\Big|\, \mathcal{F}_{t\wedge\tau_n}\bigg].$$
Since $v \in C^{2}(\mathcal{O})$, we can apply It\^o's formula to the first addend on the left-hand side of the latter, take expectations, observe that the stochastic integral has zero mean (by definition of $\tau_n$ and the fact that $v_x$ is continuous), and finally find
\begin{equation}
\label{eq:eqv-1}
\E\bigg[\int_0^{t \wedge \tau_n}e^{-\lambda u} \big(\mathcal{L}v + F - \lambda v\big)\big(S^{x,y,z}_u, I^{x,y,z}_u, \beta^z_u\big) \d u\bigg] + v(x,y,z) = \E\bigg[\int_0^{\infty}e^{-\lambda t} F\big(S^{x,y,z}_t, I^{x,y,z}_t, \beta^z_t\big) \d t\bigg];
\end{equation}
that is, by \eqref{eq:v},
$$\E\bigg[\int_0^{t \wedge \tau_n}e^{-\lambda u} \big(\mathcal{L}v + F - \lambda v\big)\big(S^{x,y,z}_u, I^{x,y,z}_u, \beta^z_u\big) \d u\bigg] = 0.$$
Dividing now both left and right-hand sides of the latter by $t$, invoking the (integral) mean-value theorem, letting $t\downarrow 0$, and using that $t\mapsto (S^{x,y,z}_t, I^{x,y,z}_t, \beta^z_t)$ is continuous, we find that $v$ is a classical solution to 
\begin{equation}
\label{eq:eqv-2}
\lambda \varphi = \mathcal{L}\varphi + F \quad \text{on} \quad \mathcal{O}.
\end{equation}

\emph{Proof of (i) - Step 2.} Let $(x,y,z) \in \mathcal{O}$, and $(\mathcal{K}_n)_n$ be an increasing sequence of open bounded subsets of $\mathcal{O}$ such that $\bigcup_{n\in\mathbb{N}}\mathcal{K}_n=\mathcal{O}$. Defining the stopping time $$\rho_n:=\inf\{t\geq0:\,(S^{x,y,z}_t, I^{x,y,z}_t, \beta^z_t\big) \notin \mathcal{K}_n\}, \ \ \ \ ,n\in \mathbb{N},$$ we set
\begin{equation}
\label{eq:defvhat}
\widehat{v}_n(x,y,z):= \E\bigg[\int_0^{\rho_n} F\big(S^{x,y,z}_u, I^{x,y,z}_u, \beta^z_u\big) \d u + e^{-\lambda \rho_n}V\big(S^{x,y,z}_{\rho_n},I^{x,y,z}_{\rho_n},\beta^{z}_{\rho_n}\big)\bigg].
\end{equation}
If $(x,y,z) \notin \mathcal{K}_n$, then $\widehat{v}_n(x,y,z)=V(x,y,z)$ as $\rho_n=0$ a.s. Take then $(x,y,z) \in \mathcal{K}_n$. By the same arguments as in Step 1 and considering that $V$ is  continuous  on $\mathcal{K}_n$, the function  $\widehat{v}_n$ is a solution to
\begin{equation}
\label{eq:eqvhat-1}
\lambda \varphi = \mathcal{L}\varphi + F, \quad \text{on} \quad \mathcal{K}_n, \qquad \varphi=V \quad \text{on} \quad \partial\mathcal{K}_n.
\end{equation}
Since also $V$ is a viscosity solution to the same equation and since uniqueness of viscosity solution holds for such a problem (cf., e.g., \cite{CIL}), we have $\widehat{v}_n = V$ on $\overline{\mathcal{K}}_n$. Because $\rho_n \uparrow \infty$ for $n \uparrow \infty$ (as the boundary of $\mathcal{O}$ is unattainable for $\big(S^{x,y,z}_t, I^{x,y,z}_t, \beta^z_t\big)$), by taking limits as $n \uparrow \infty$ in \eqref{eq:defvhat} we find that
$$V(x,y,z) = \lim_{n\uparrow\infty}\widehat{v}_n(x,y,z) = v(x,y,z), \quad (x,y,z) \in \mathcal{O},$$
where the last equality follows by dominated convergence upon recalling that $V$ is bounded. But then $V=v$ on $\mathcal{O}$, and therefore $V \in C^{2}(\mathcal{O})$ and solves \eqref{eq:eqv-2} by \emph{Step 1}. That is, $V$ is a classical solution to the HJB equation \eqref{eq:HJB}.
\vspace{0.25cm}

\emph{Proof of (ii).} The optimality of \eqref{eq:OC} follows by a standard verification theorem based on an application of It\^o's formula and the proved regularity of $V$ (see, e.g., Chapter 3.5 in \cite{Pham}).
\end{proof}


\section{A Case Study with Numerical Illustrations}
\label{sec:numerics}

In this section we illustrate numerically the results of our model, with the aim of providing qualitative properties of the optimal containment policies in a case study.

We use the mean-reverting model for the dynamics of $\beta$, i.e. 
\begin{equation}
\label{eq:ZOUbis}
\d \beta_t = \vartheta\Big(\widehat{\beta}\big(L - \xi_t\big) - \beta_t\Big) \d t + \sigma\beta_t(\gamma - \beta_t)\d W_t, \quad t>0, \qquad \beta_0= z \in (0,\gamma),
\end{equation}
for some $L, \vartheta, \gamma, \sigma>0$, $\widehat{\beta} \in (0,\gamma)$. {{Notice that such a choice of the dynamics of $\beta$ fulfills all the requirements of Assumption \ref{assZ} (see Proposition \ref{prop:betadyn} in Appendix \ref{sec:app})}}. Moreover, we assume that the social planner has a quadratic cost function of the form
\begin{equation}
\label{costexample}
C(y, \xi)=\left(\frac{y}{\bar{y}}\right)^2 +\frac{1}{2}\xi^2.
\end{equation}
{{The latter can be interpreted as a Taylor approximation of any smooth, convex, separable cost function with global minimum in $(0,0)$.}} In \eqref{costexample}, $\bar{y} \in (0,1)$ represents, e.g., the maximal percentage of infected people that the health-care system can handle. 

Notice that in this case for any $(x,y,z)\in \mathcal{O}$ one has (cf.\ \eqref{eq:OCrule})
\begin{equation}
\label{eq:OCrule-bis}
\widehat{\xi}(x,y,z) = \begin{cases}
L, \ \ \ \ \ \ \ \ \ \ \ \ \ \ \ \ \ \ \ \ \ \ \ \ \ \ \text{if}\,\, V_z(x,y,z) > \frac{L}{\vartheta \widehat{\beta}},\\
\vartheta \widehat{\beta}V_z(x,y,z), \ \ \ \ \ \ \ \ \ \ \ \text{if}\,\, V_z(x,y,z) \in [0, \frac{L}{\vartheta \widehat{\beta}}], \\
0,  \ \ \ \ \ \ \ \ \ \ \ \ \ \ \ \ \ \ \ \ \ \ \ \ \ \ \text{if}\,\, V_z(x,y,z) < 0.
\end{cases}
\end{equation}

Our numerics is based on a recursion on the nonlinear equation
$$\big(\lambda - \mathcal{L}\big)v(x,y,z) = C^{\star}(y,z,v_z(x,y,z)), \quad (x,y,z) \in \mathcal{O},$$
which is solved by the value function in the classical sense (cf.\ Theorem \ref{thm:main}).
Namely, starting from $v^{[0]} \equiv 0$ we use the recursive algorithm:
$$\big(\lambda - \mathcal{L}\big)v^{[n+1]} = C^{\star}(y,z,v^{[n]}_z), \quad n \geq 1$$
and those equations are solved by Montecarlo methods based on the Feynmann-Kac formula
$$v^{[n+1]}(x,y,z) = \E\bigg[\int_0^{\infty}e^{-\lambda t} C^{\star}\big(I^{x,y,z}_t, \beta^{z}_t, v^{[n]}_z(S^{x,y,z}_t, I^{x,y,z}_t, \beta^{z}_t)\big) \d t\bigg], \quad (x,y,z)\in \mathcal{O}.$$
Such an approach is needed because of the lack of appropriate boundary conditions on the HJB equation, as the boundary $\partial\mathcal{O}$ is unattainable for the underlying controlled dynamical system.

{{We take a day as a unit of time.}} In our experiments we assume that the average length of an infection equals $18$ days, so that $\alpha=\frac{1}{18}$ (see also \cite{Alvarez}, \cite{Erhan}, and \cite{KruseStrack}), the level of the maximal possible transmission rate of the disease is $\gamma=0.16$, the natural transmission rate of the disease is $\widehat{\beta}=0.1$, towards which the transmission rate $(\beta_t)_t$ reverts at rate $\vartheta=0.1$ when $\xi\equiv0$, {{and $\sigma=1$, so that the fluctuations of $(\beta_t)_t$ are (at most) of order $10^{-2}$}}. Furthermore, we set $\lambda=1/365$\footnote{{{Our choice of the value of $\lambda=\lambda_o + \delta$ can be justified by assuming that it takes at least a year to develop a vaccine (i.e.\ $1/\lambda_o \geq 365$) and that the intertemporal discount rate of the social planner $\delta$ is negligible with respect to vaccination discovery rate. Indeed, a typical value for the annual discount rate $\delta$ is $5\%$ which is clearly such that $\frac{0.05}{365} \ll \frac{1}{365}$.}}}, and we fix $\bar{y}={0.1}$ in \eqref{costexample}. Finally, in all simulations we assume that at day zero about $1\%$ of the population is infected. 

{{In all the subsequent pictures we show the mean paths of the considered quantities, with their $95\%$ confidence interval. The Montecarlo average has been performed by employing $6000$ independent simulations.}}

In Section \ref{optimal}, we compare the optimal social planner policy with the case of no restrictions; in Section \ref{limitedContainment} we consider strategies in which the containment measures are limited to a fixed  percentage $L \in [0,1]$ and provide a comparison between them; in Section \ref{sec:sigma} we study the effect of the fluctuations of the transmission rate on the problem's solution.


\subsection{The Optimal Social Planner Policy}
\label{optimal} 

We compare the optimal social planner policy with the case of no restrictions (see Figure \ref{fig1}). In the optimal social planner policy severe lockdown measures (larger than $40\%$) are imposed for a period of circa 63 days, starting on day 79; then, it follows a gradual reopening phase. 
The final percentage of recovered individuals is about $50\%$, in contrast to $72\%$ which is the total percentage of recovered individuals in the case of no restrictions.

Furthermore, the cases of optimal lockdown and no lockdown show a substantial difference in the evolution of the reproduction number $\mathcal{R}_t:=\frac{\beta_t}{\alpha}$: in the case of lockdown policies at work, in the most restrictive period, the latter is significantly decreased to values around $0.6$. Another relevant quantity to analyze is $\mathcal{R}_t S_t$. Indeed, recalling \eqref{eq:I}, it is easy to see that the percentage of infected naturally decreases at exponential rate $\alpha(\mathcal{R}_t S_t - 1)$ if $\mathcal{R}_t S_t$ is maintained strictly below $1$. We observe that, under the suboptimal action ``no lockdown'', $\mathcal{R}_t S_t$ lies below one from day $85$ on. On the other hand, the optimal containment policy is such that $\mathcal{R}_t S_t <1$ from day $75$ on. As a consequence, $\mathcal{R}_t$ can be let oscillate strictly above one (actually, around $1.7$) during the final phase of partial reopening so that the negative impact of lockdowns on the economic growth can be partially dammed.

\begin{figure}[htb]
\begin{center}
\begin{tabular}{cccccc}
\includegraphics[height=2.8cm]{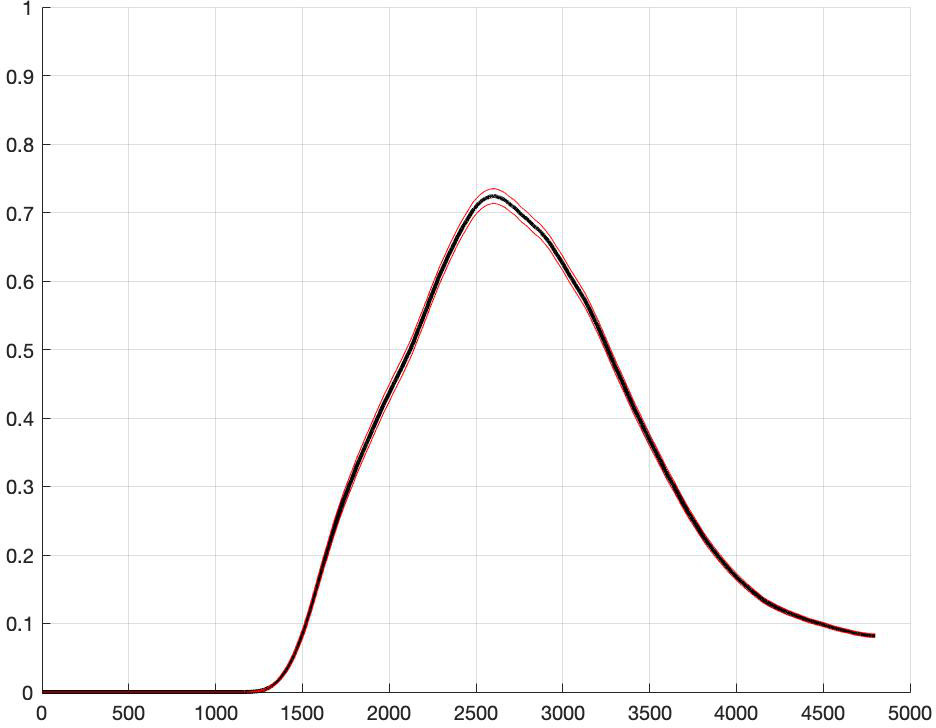}
 &
 \includegraphics[height=2.8cm]{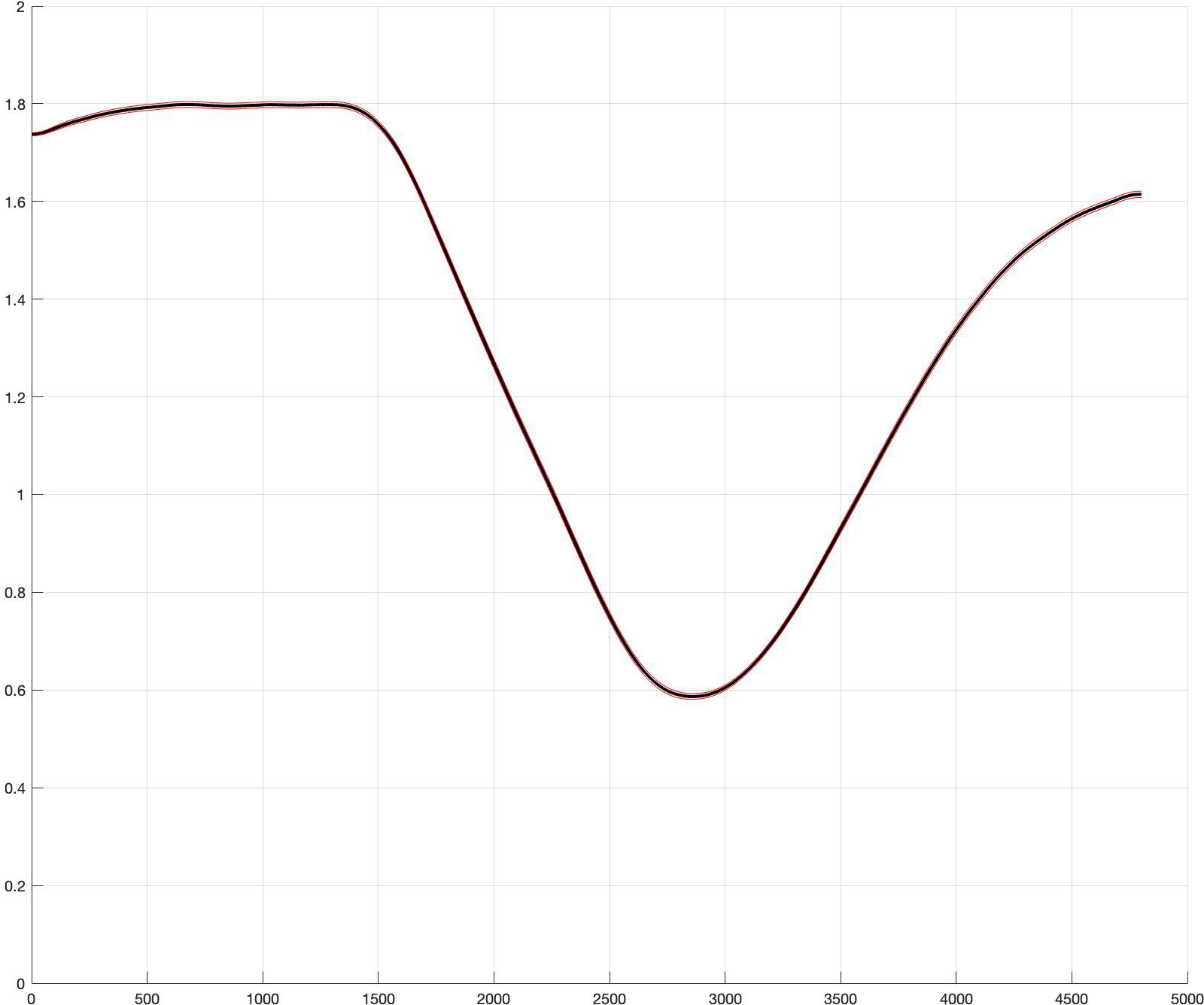} &  
 \includegraphics[height=2.8cm]{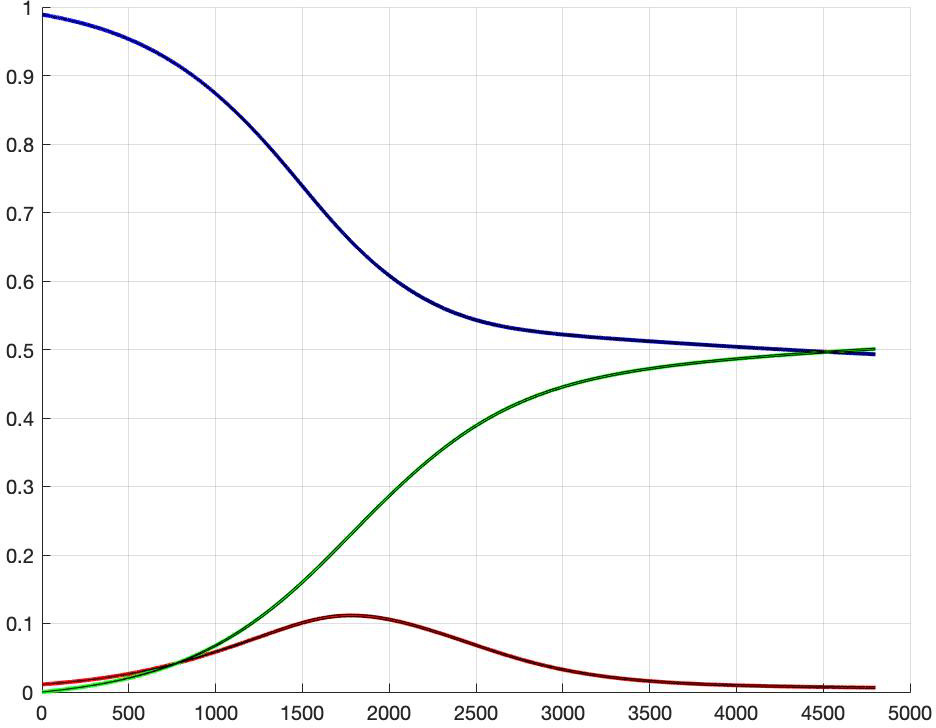}  &
\includegraphics[height=2.8cm]{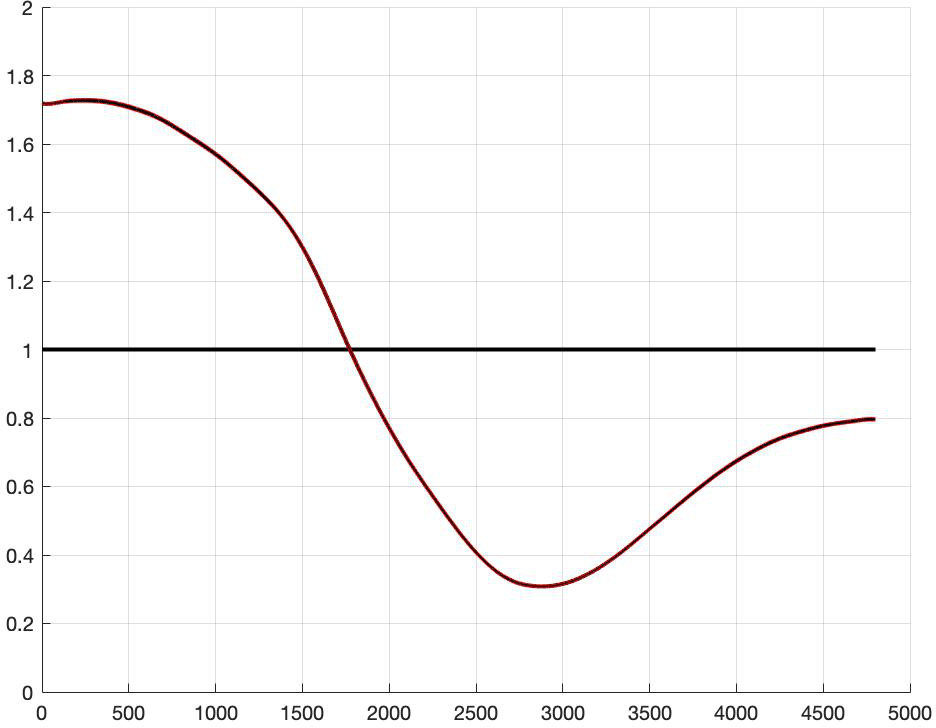}
\\ \\
\includegraphics[height=2.8cm]{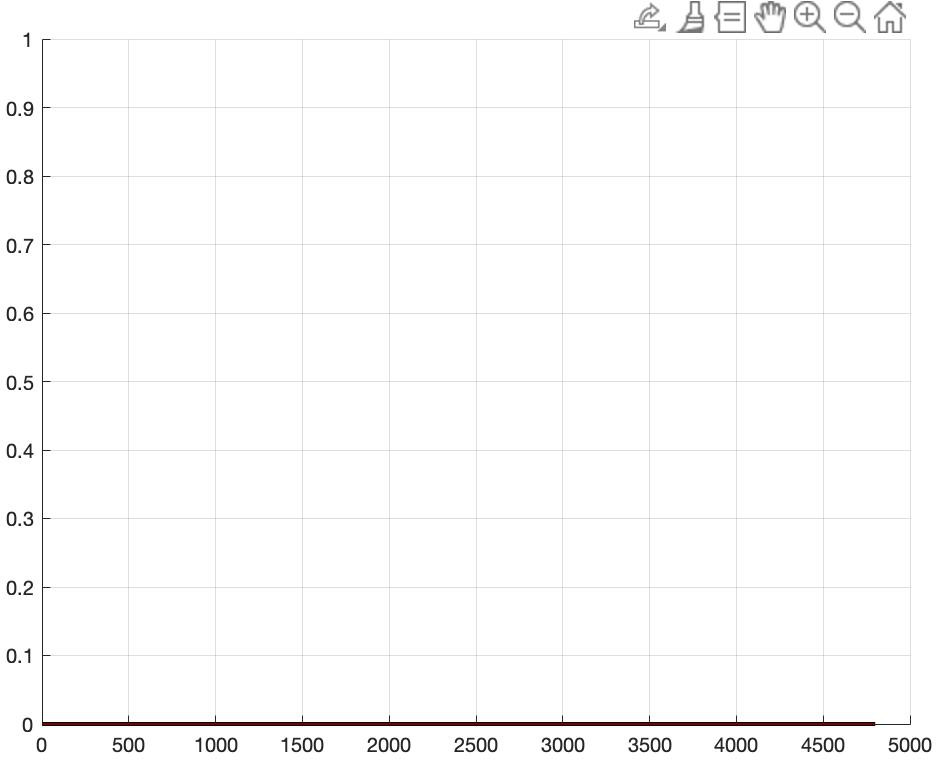}
 & 
\includegraphics[height=2.8cm]{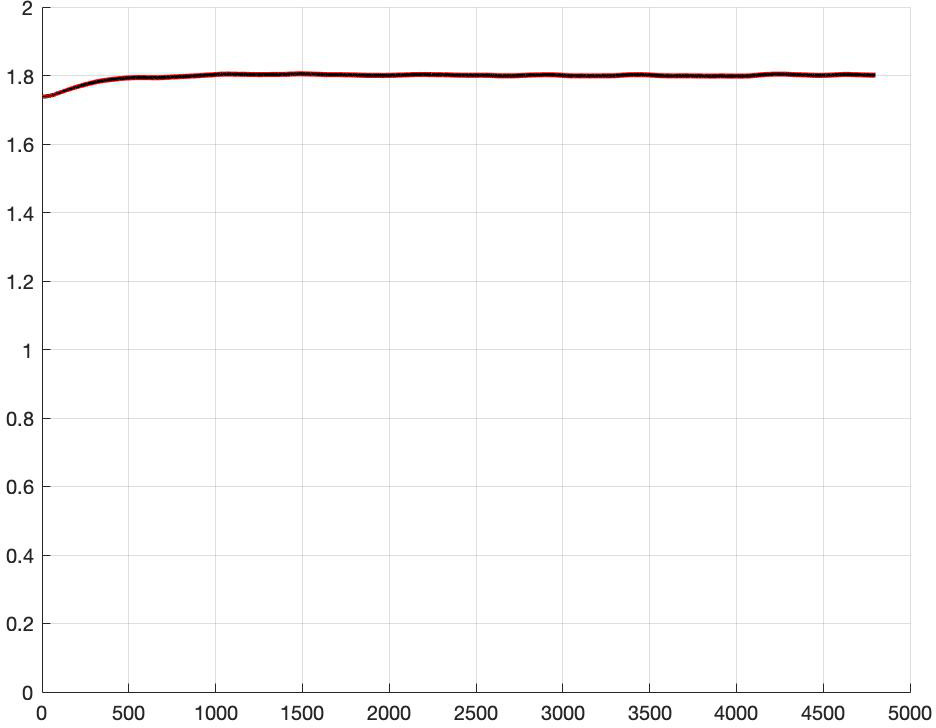} &
\includegraphics[height=2.8cm]{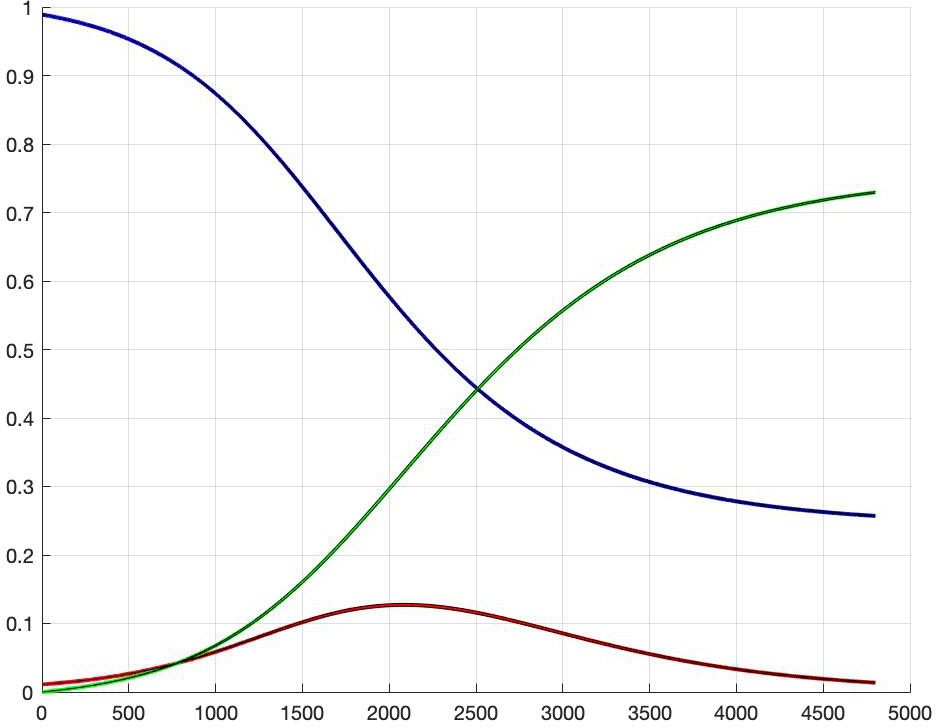} &  
\includegraphics[height=2.8cm]{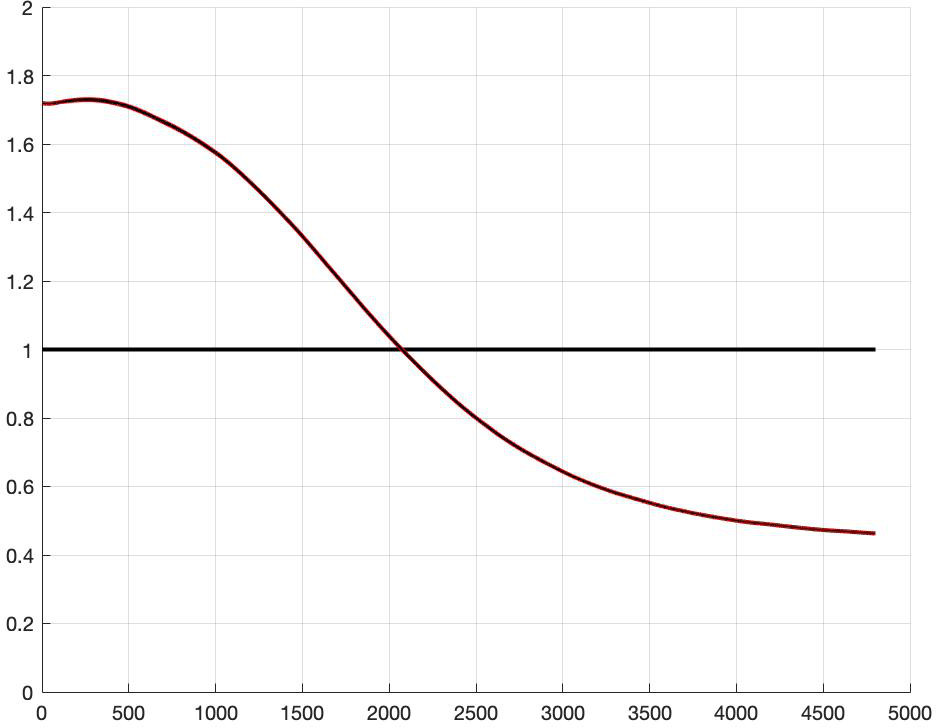}\\
\end{tabular}
\caption{Comparison between the optimal social planner policy (upper panel) and the case of no restrictions (lower panel).
The figures in the first column show the (average) evolution of the containment policy through the value of the optimal control $\xi_t$; the ones in the second column show  
the (average) evolution of the instantaneous reproduction number $\mathcal{R}_t=\frac{\beta_t}{\alpha}$; 
the ones in the third column show (average) evolution of the percentage of susceptible (in blue), infected (in red) and recovered (in green) individuals; the ones in the fourth column show the (average) evolution of the product $\mathcal{R}_t\cdot S_t$.}
\label{fig1}
\end{center}
\end{figure}


\subsection{The Optimal Social Planner Policy with Limited Containment}
\label{limitedContainment}

In many countries, a vigorous lockdown could not always be feasible, especially for long periods. Further, as pointed out by recent literature (for instance see \cite{Asprietal}), gradual policies of longer duration but more moderate containment exhibit large welfare benefits comparable to the ones obtained by a drastic lockdown. For this reason, we consider a strategy in which the containment measures are limited to a fixed percentage $L \in [0,1]$. Notice that $L=0.7$ in \cite{Alvarez}, $L=\{0.7, 1\}$ in \cite{Acemoglu} and \cite{Erhan}. A comparison of the optimal social planner policy with limited containment $L \in \{0.2, 0.4, 0.6, 0.8\}$ is shown in Figure \ref{fig2} and a 
summary is contained in Table \ref{tab1}. 
\begin{figure}[htb]
\begin{center}
\begin{tabular}{ccccccc}
\includegraphics[height=2.8cm]{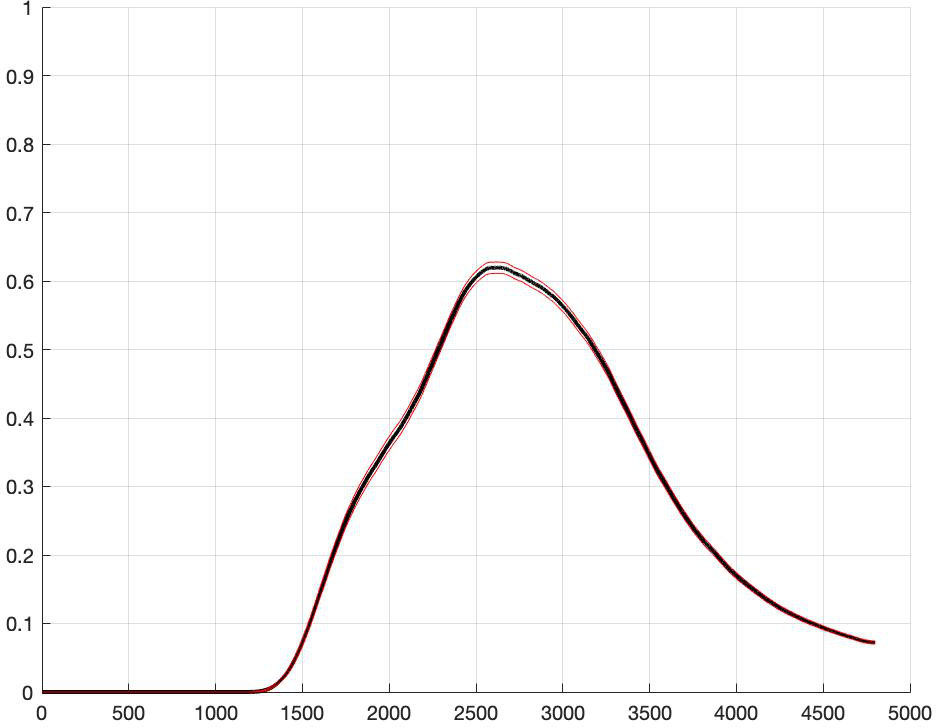} & &
\includegraphics[height=2.8cm]{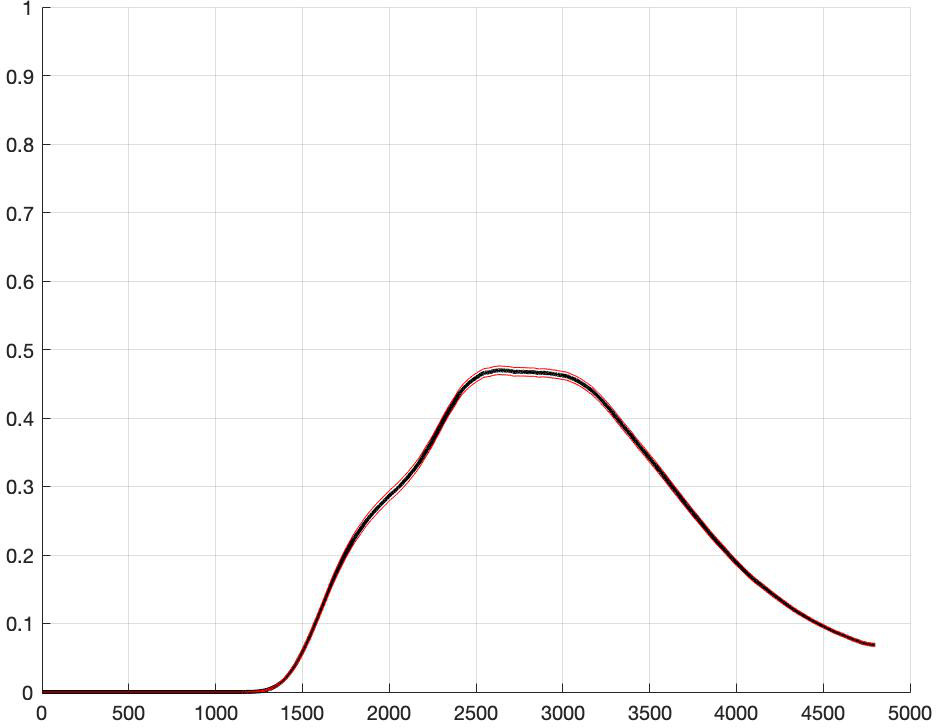} &  &
\includegraphics[height=2.8cm]{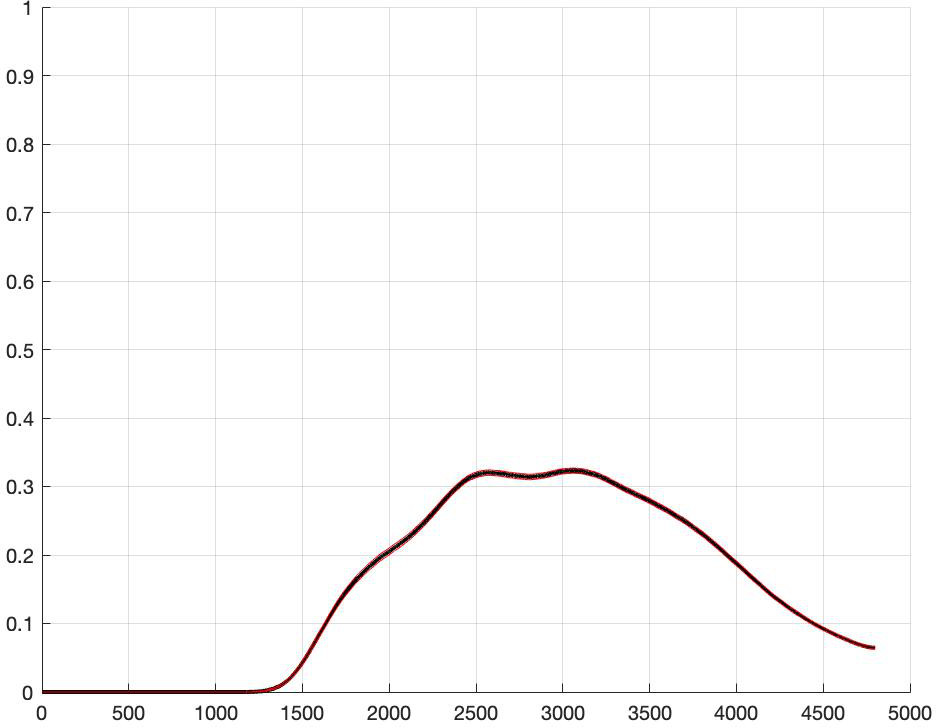} &  &
\includegraphics[height=2.8cm]{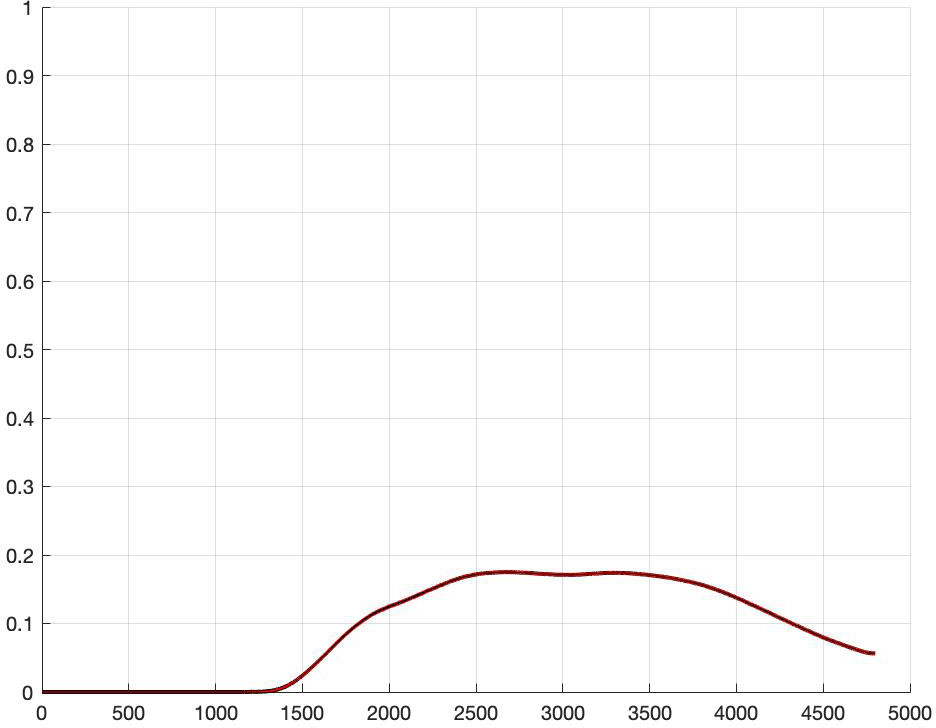} \\ \\
\includegraphics[height=2.8cm]{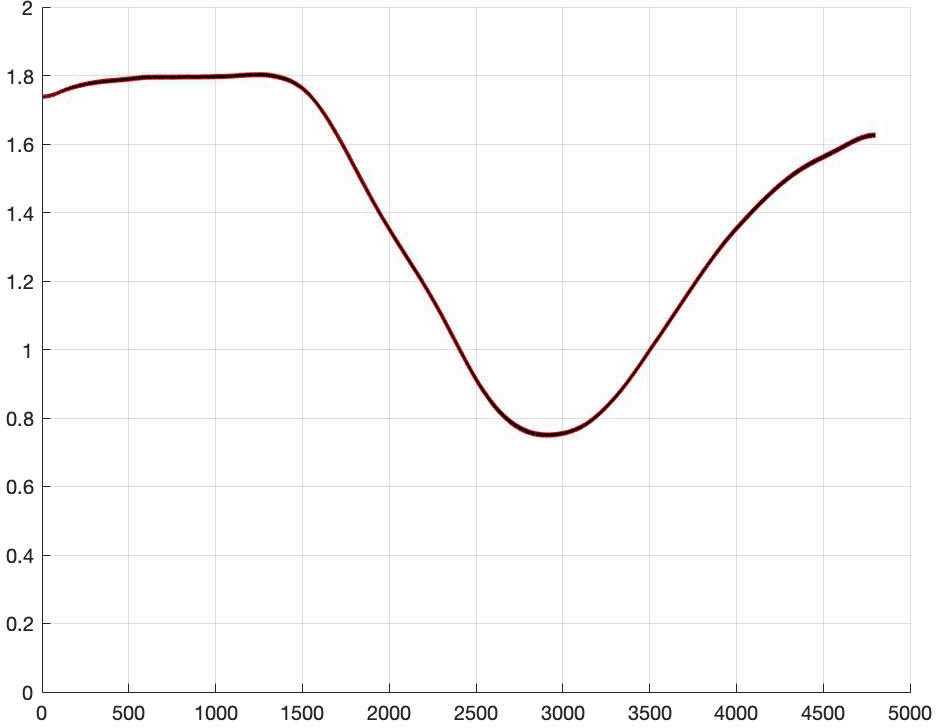} & &
\includegraphics[height=2.8cm]{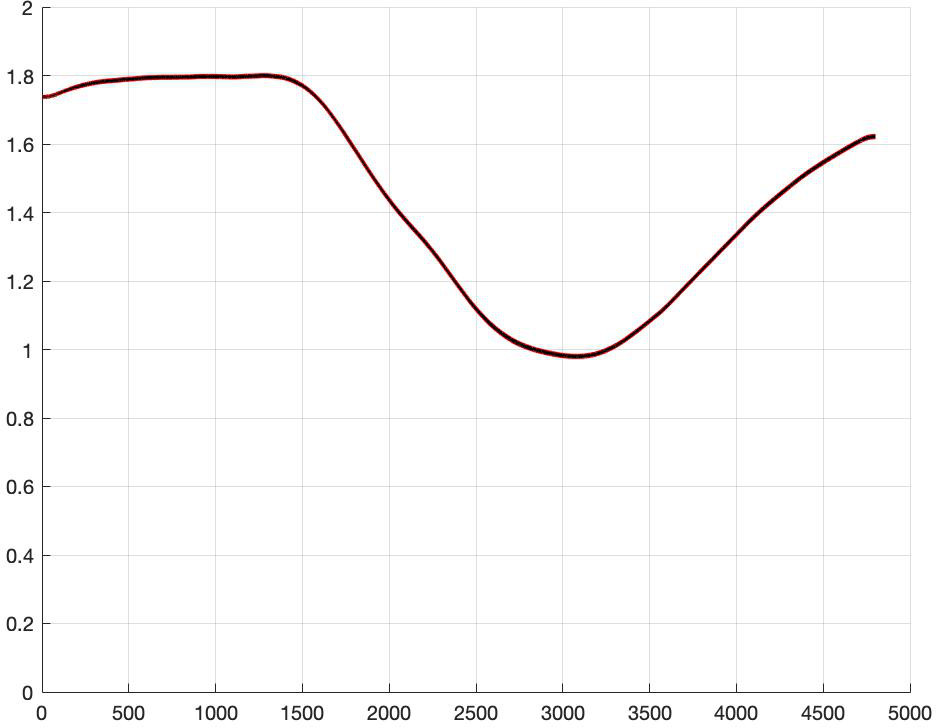} &  &
\includegraphics[height=2.8cm]{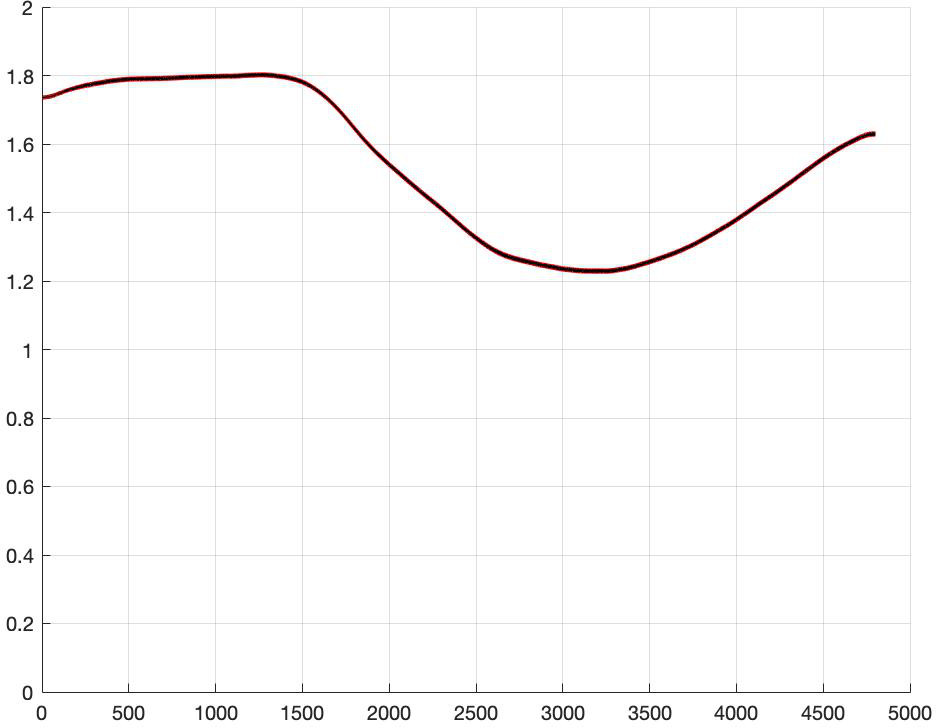} &  &
\includegraphics[height=2.8cm]{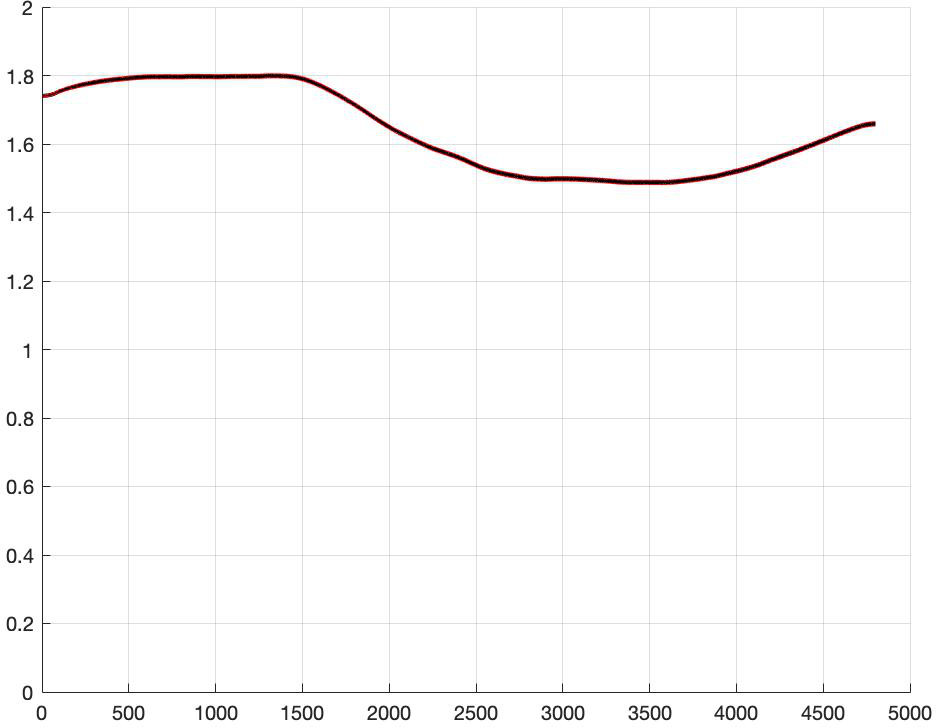} \\ \\
\includegraphics[height=2.8cm]{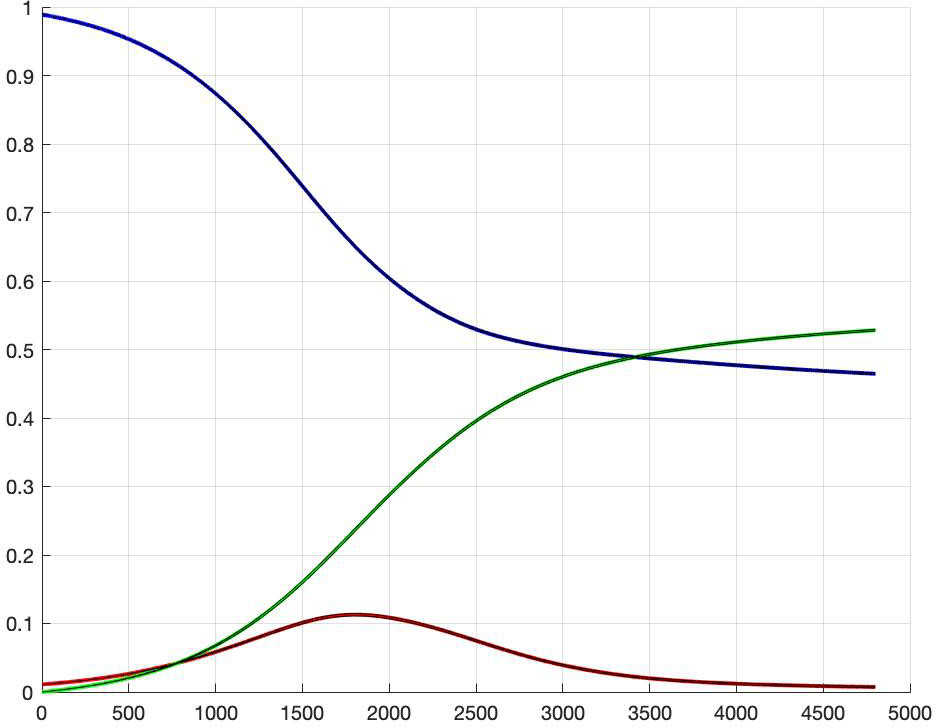} & &
\includegraphics[height=2.8cm]{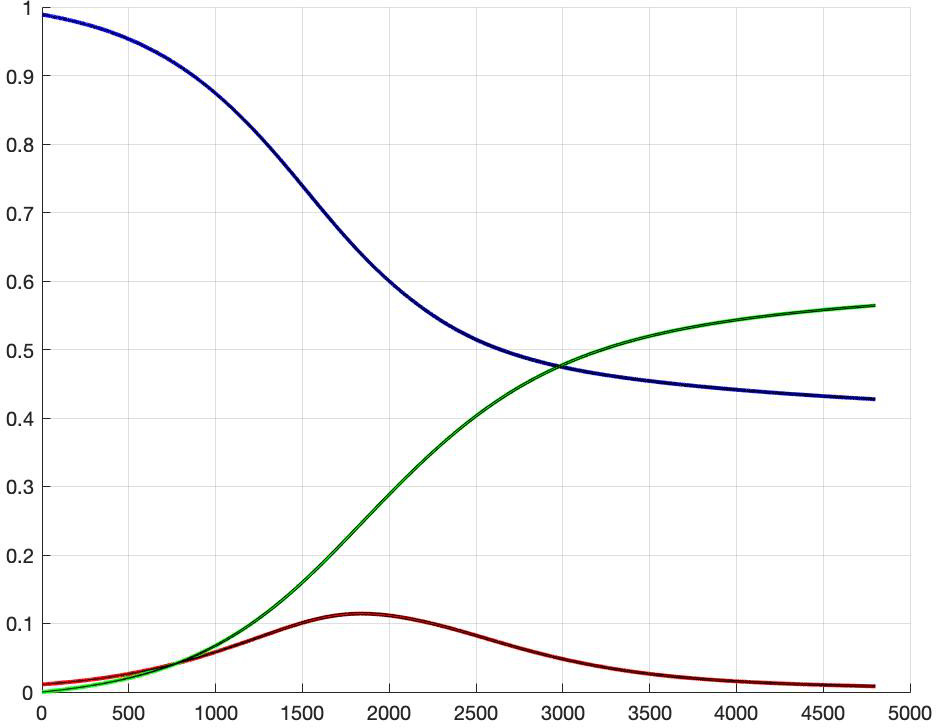} &  &
\includegraphics[height=2.8cm]{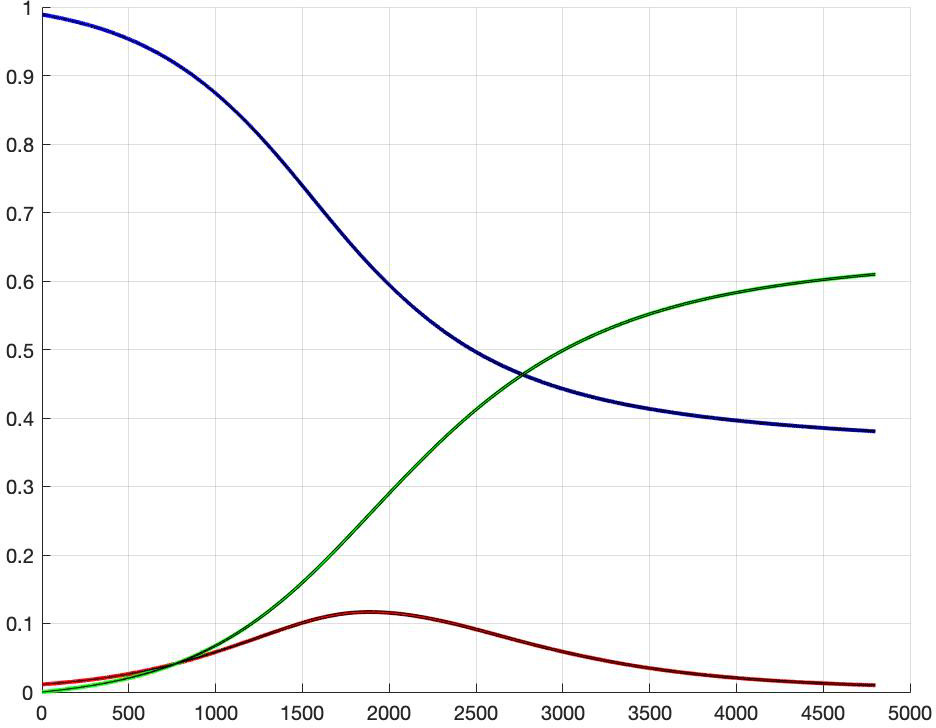} &  &
\includegraphics[height=2.8cm]{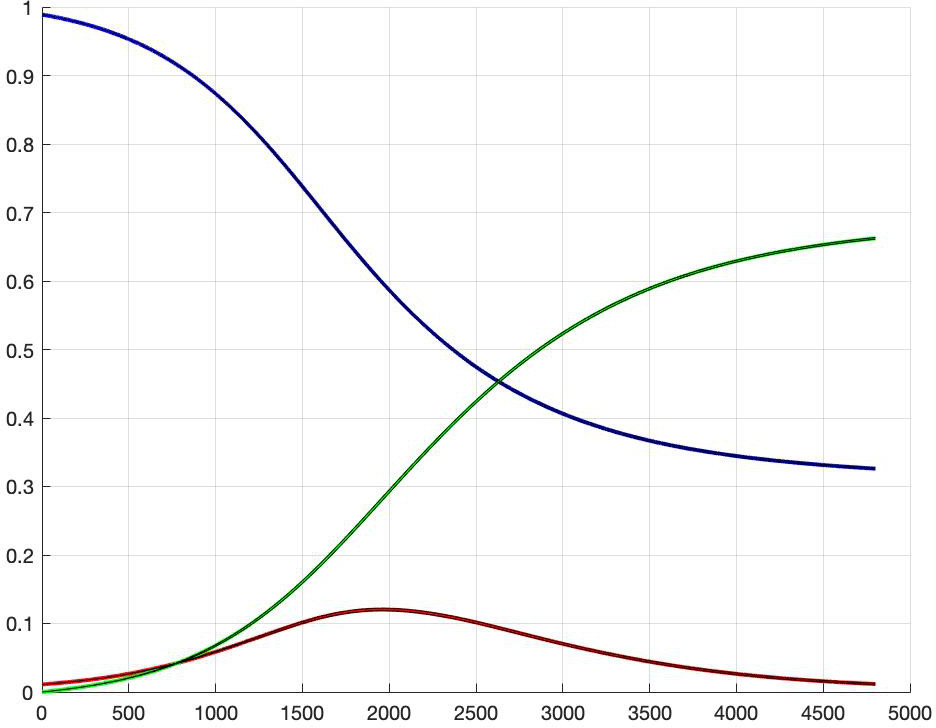} \\ \\
\includegraphics[height=2.8cm]{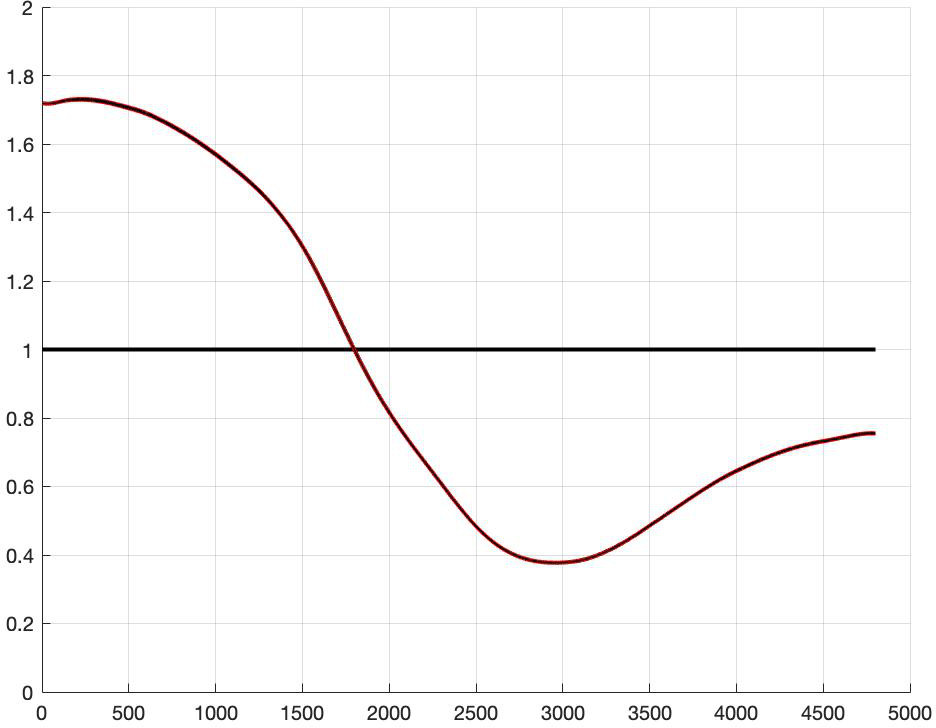} & &
\includegraphics[height=2.8cm]{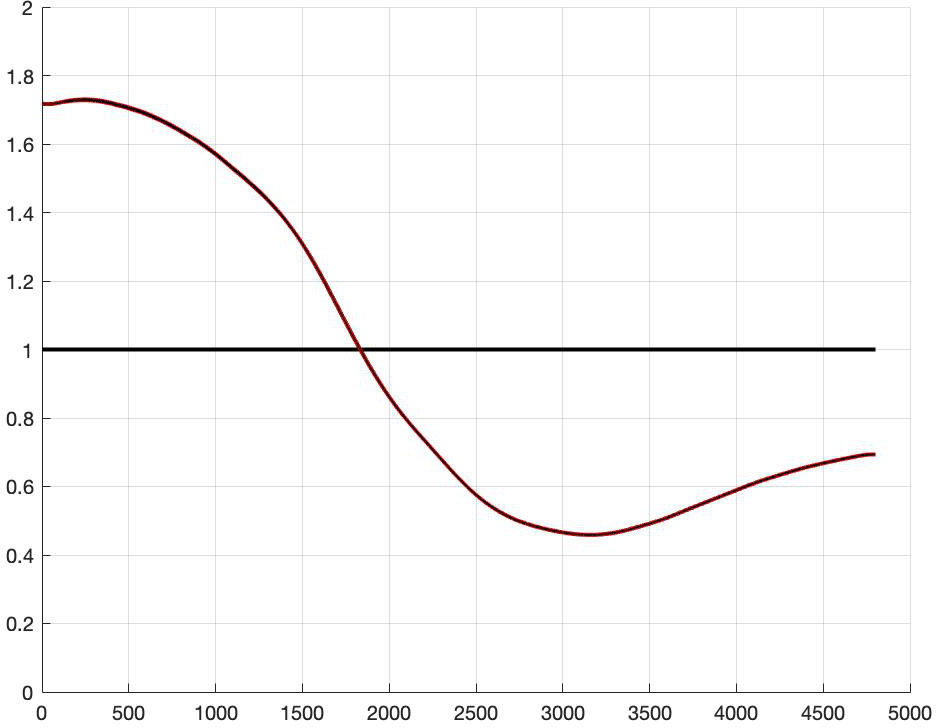} &  &
\includegraphics[height=2.8cm]{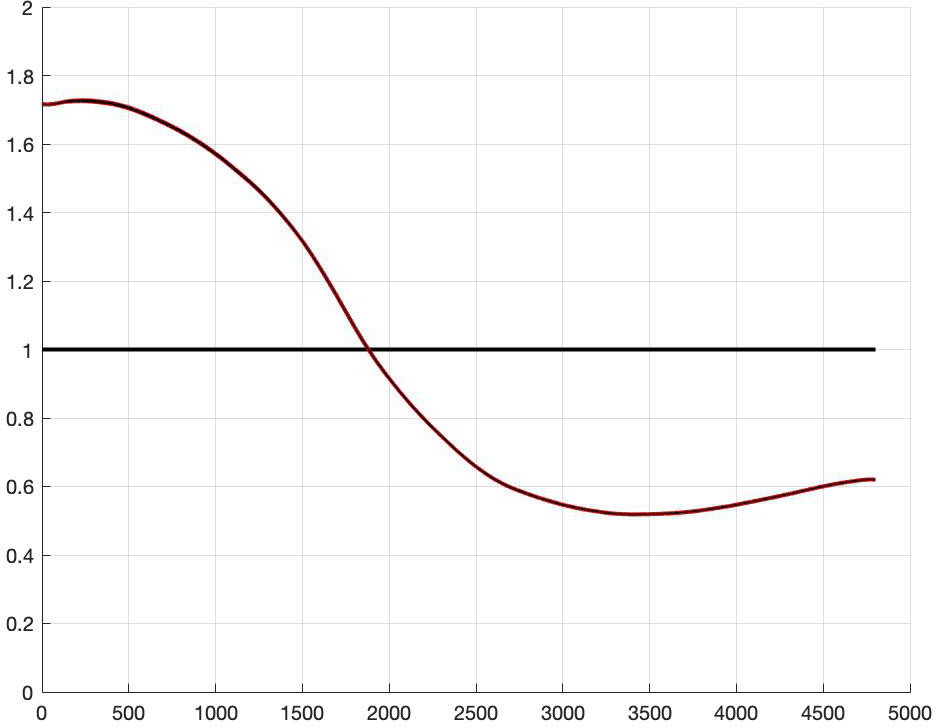} &  &
\includegraphics[height=2.8cm]{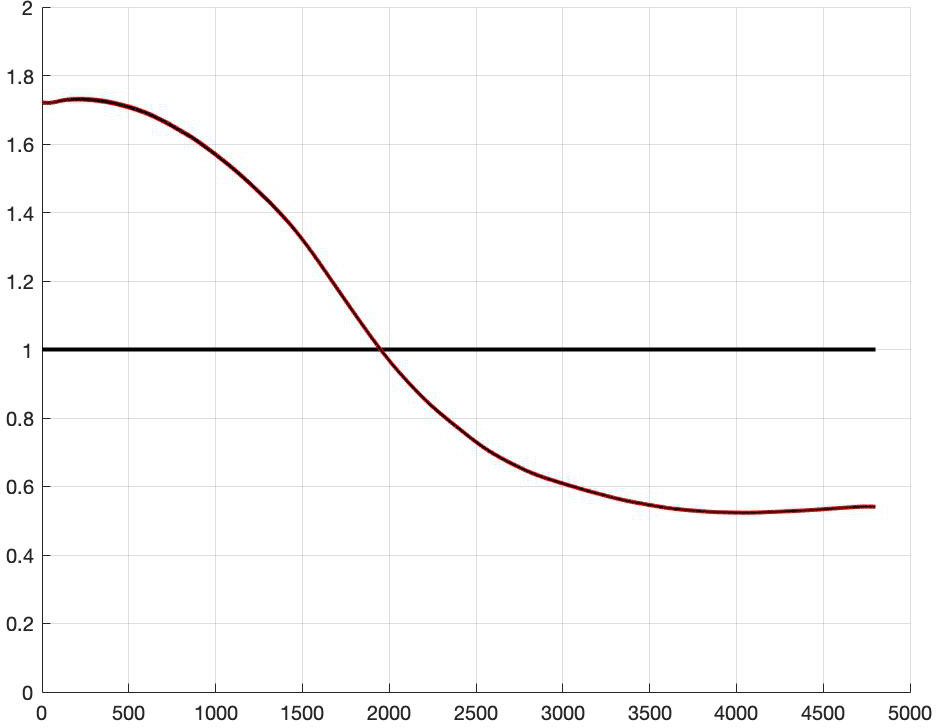} \\ \\

$L=0.8$ & & $L=0.6$ & & $L=0.4$ & & $L=0.2$
\end{tabular}
\caption{Comparison between the optimal social planner policy with limited containment $L$. The figures in the first row show  
the evolution of the (average) containment policy through the value of the optimal control $\xi_t$; the figures in the second row show the (average) evolution of the instantaneous reproduction number $\mathcal{R}_t=\frac{\beta_t}{\alpha}$; the figures in the third row show the evolution of the (average) percentage of susceptible (in blue), infected (in red) and recovered (in green) individuals; the figures in the fourth row show the (average) evolution of the product $\mathcal{R}_t\cdot S_t$.
The limited level of containment varies with the columns: the first column treats the case $L=0.8$, the second column the case $L=0.6$, 
 the third column the case $L=0.4$ and the last column the case $L=0.2$.}
\label{fig2}
\end{center}
\end{figure}

\begin{table}[htb]
\begin{center}
\begin{tabular}{lcccccccccc}
& & $L=0.8$ & & $L=0.6$ & & $L=0.4$ & & $L=0.2$\\
\hline
First day of containment           && {54} && {54} && {54} && {54}\\
Recovered                                 && {52\%} && {58\%} && {61\%} && {68\%}\\
\hline
\end{tabular}
\caption{Optimal social planner policy with different values of limited containment $L$.}\label{tab1}
\end{center}
\end{table}

Clearly, the larger $L$ is, the smaller are the social costs (by definition of the value function). 
Our experiment shows that for $L=0.4, 0.6, 0.8, 1$, the final percentage of recovered (hence of the total amount of infected) in average ranges from $52\%$ (case $L=0.8$) up to $68\%$ (case $L=0.2$). In all the cases, the optimal containment starts at the maximal rate and the first day of containment is substantially the same (around day $54$).

Different ceilings $L$ on the containment strategies also affect the values and the fluctuations' size of the reproduction number $\frac{\beta_t}{\alpha}$: smaller values of $L$ correspond to milder variation of the reproduction number $\mathcal{R}_t$ of size $0.3$, whereas larger values of $L$ lead to rapid changes of $\mathcal{R}_t$ which reaches levels smaller than $1$ (less than $0.8$ for $L=0.8$ and less than $0.6$ for $L=1$). In all the cases, $\mathcal{R}_t S_t$ lies strictly below $1$ after a certain date, which is decreasing with respect to $L$ (see the last column in Figures \ref{fig1} and \ref{fig2}). Notice that without any containment policies, $\mathcal{R}_t S_t$ decreases on time due to a natural ``herd-immunity'' effect. On the other, when lockdowns are in place, we observe a faster decrease of $\mathcal{R}_t S_t$ which is forced by the initial vigorous policymaker's actions. The final relaxation of the latter then allows for an increase of $\mathcal{R}_t S_t$, which is however constrained below the critical level of $1$. Such an effect is monotone decreasing with respect to $L$.


\subsection{The Role of Uncertainty}
\label{sec:sigma}

{{
The main new feature of our model is to consider a (controlled) stochastic transmission rate in the framework of the classical SIR model. In this section we study numerically how an increase of the fluctuations of the transmission rate affects the optimal solution. In particular, in Figure \ref{fig3} the volatility $\sigma$ takes values $1$, $5$ and $10$, thus leading to fluctuations of $\beta$ of order $10^{-2}$, $5 \times 10^{-2}$, and $10^{-1}$, respectively (indeed, recall that $\sigma(\beta)=\sigma \beta(\gamma-\beta)$ attains its maximum at $\gamma/2$ and $\gamma=0.16$).
\begin{figure}[htb]
\begin{center}
\begin{tabular}{ccccccc}
\includegraphics[height=2.8cm]{xi1} & &
\includegraphics[height=2.8cm]{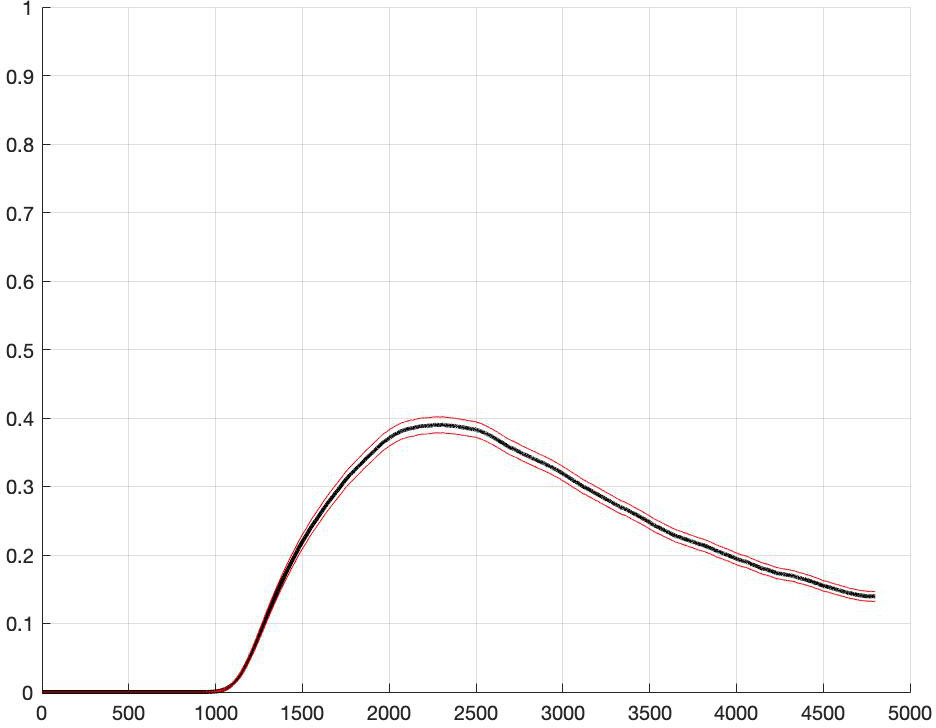} & &
\includegraphics[height=2.8cm]{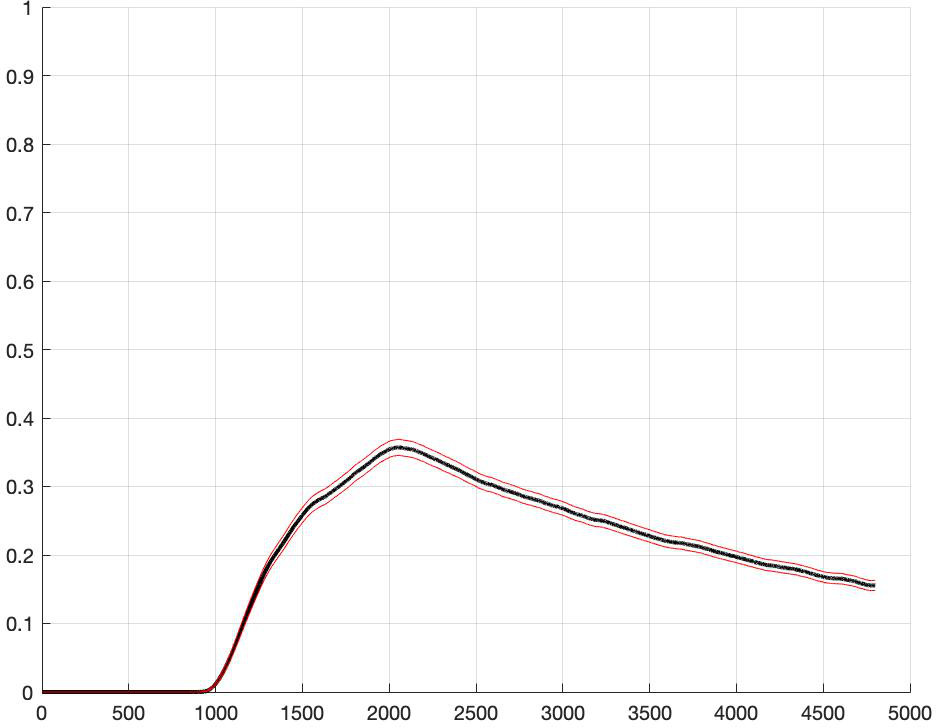} \\ \\
\includegraphics[height=2.8cm]{SIR1} &  &
\includegraphics[height=2.8cm]{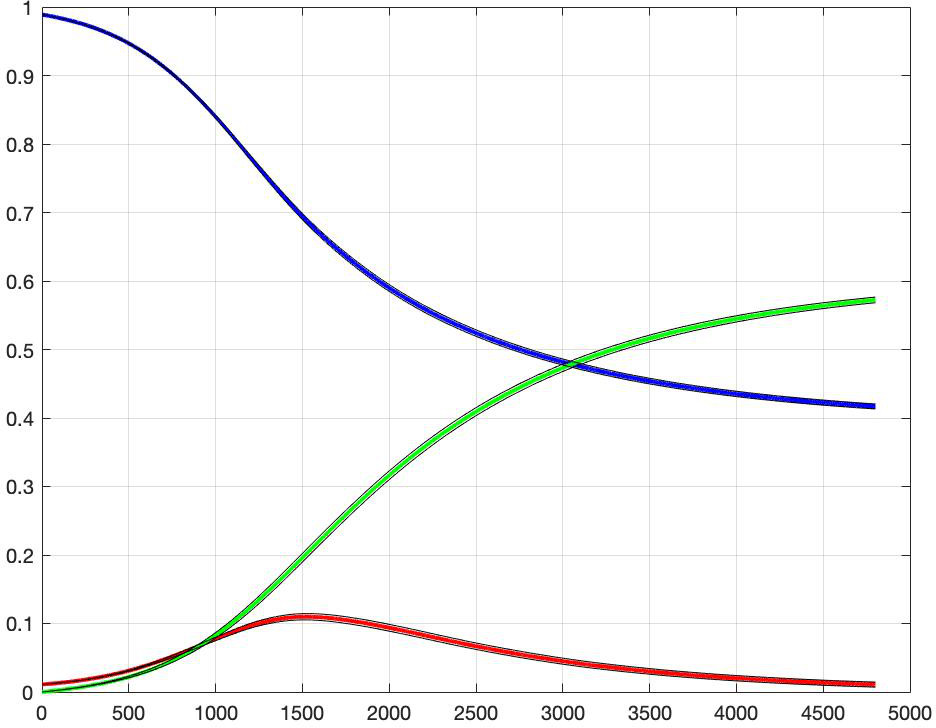} &  &
\includegraphics[height=2.8cm]{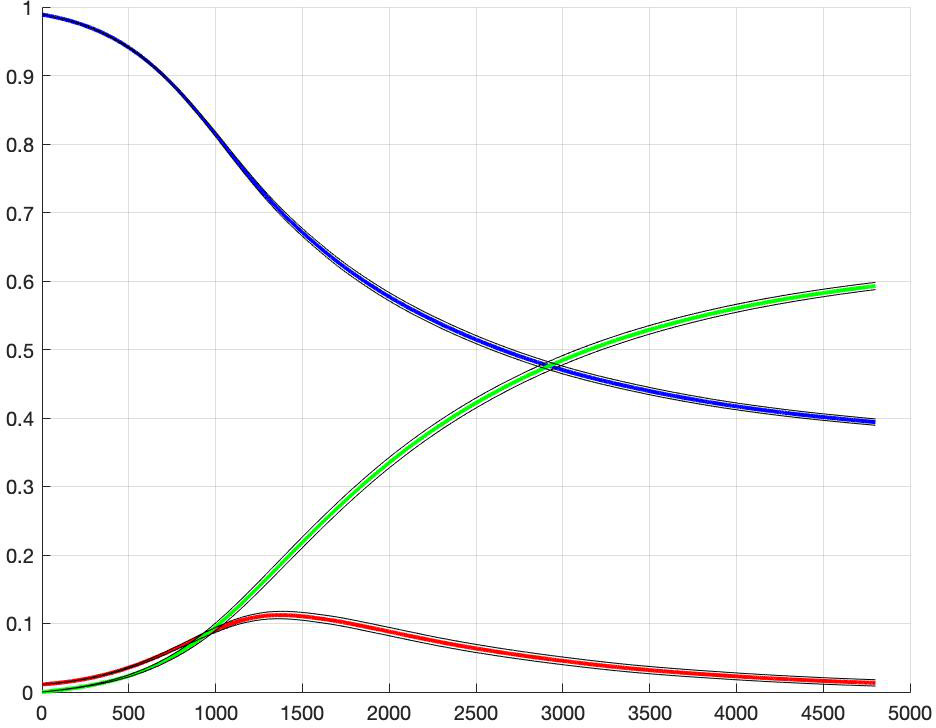} \\ \\

$\sigma=1$ & & $\sigma=5$ & & $\sigma=10$ 
\end{tabular}
\caption{Comparison between the optimal social planner policy with different $\sigma$, when $L=1$. The figures in the first row show  
the evolution of the (average) containment policy through the value of the optimal control $\xi_t$; the figures in the second row show the evolution of the (average) percentage of susceptible (in blue), infected (in red) and recovered (in green) individuals.
The level of $\sigma$ varies with the columns: the first column treats the case $\sigma=1$, the second column the case $\sigma=5$, 
 the third column the case $\sigma=10$.}
\label{fig3}
\end{center}
\end{figure}

We observe from Figure \ref{fig3} that larger fluctuations of $\beta$ have the effect of anticipating the beginning of the lockdown policies, and of diluting the actions over a longer period. Indeed, when $\sigma=5$ and $\sigma=10$, the optimal lockdown policy starts around day $46$ and $42$, respectively, in contrast to day $54$ of the case $\sigma=1$. Moreover, when $\sigma$ increases, the maximal employed lockdown intensity reduces and the level of containment stabilizes at a larger value in the long run. This can be explained by thinking that an increase in the fluctuations of the transmission rate induces the policymaker to act earlier and over a longer period in order to prevent positive large shocks of $\beta$. However, in order to dam the social costs resulting from a longer period of restrictions, the maximal intensity of the lockdown policy should be reduced.

Moreover, such a spread of the optimal lockdown policy gives rise to an increase of the final percentage of recovered (which is circa $58\%$ and $60\%$ when $\sigma =5$ and $\sigma=10$, respectively, and circa $50\%$ when $\sigma =1$). 
}}


\medskip

\section{Conclusions}
\label{sec:concl}

We have studied the problem of a policymaker which during an epidemic is challenged to optimally balance the safeguard of public health and the negative economic impact of severe lockdowns. The policymaker can implement containment policies in order to reduce the trend of the disease's transmission rate, which evolves stochastically in continuous time. In the context of the SIR model, our theoretical analysis allows to identify the minimal social cost function as a classical solution to the corresponding dynamic programming equation, as well as to provide an optimal control in feedback form. 

In a case study in which the transmission rate is a (controlled) mean-reverting diffusion process, numerical experiments show that the optimal lockdown policy is characterized by three distinct phases: the epidemic is first let freely evolve, then vigorously tamed, and finally a less stringent containment should be adopted. Interestingly, in the last period the epidemic's reproduction number is let oscillate strictly above one although the product ``reproduction number $\times$ percentage of susceptible'' is kept strictly below the critical level of one. Hence, under the optimal containment policy, the percentage of infected decreases naturally at an exponential rate and the social planner is then allowed to substantially relax the lockdown in order not to incur too heavy economic costs. {{Moreover, we show that an increase in the fluctuations of the transmission rate gives rise to an earlier beginning of the optimal lockdown policy, which is also diluted over a longer period of time.

We believe that our work is only a first step in enriching the SIR model of a stochastic controlled component and in understanding the policymaker's problem of optimally balancing the safeguard of public health and social wealth. There is still much to be done in order to incorporate other features like the partial detectability of the transmission rate or the role of public investment on the discovery of a vaccination (see Remark \ref{rem:tau} on this). We leave the analysis of the resulting challenging problems for future work.}}


\appendix

\section{Technical results}
\label{sec:app}

\begin{lemma}\label{lemma:app}
Let $\mathcal{O}'$ be an open neighborhood of $\bm 0=(0,0,0)\in\R^3$. Let $W:\mathcal{O}'\to \R$ be a semiconcave function such that $W_z^-(\bm 0)>W_z^+(\bm 0)$.
Then  there exists a sequence of functions  $(\varphi^n)_n\subset C^{2}(\mathcal{O}')$ such that 
\begin{equation}
\label{eq:varphin}
\begin{cases}
\varphi^n(\mathbf{0})=W(\bm 0)=0,\\  \varphi^n\geq W  \ \mbox{in a neighborhood of} \ \bm 0,\\  |D\varphi^n(\bm 0)|\leq L<\infty,\\  \varphi^n_{zz}(\mathbf{0})\stackrel{n\to\infty}{\longrightarrow}-\infty.
\end{cases}
\end{equation}
\end{lemma}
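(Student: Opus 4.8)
The plan is to construct the sequence $(\varphi^n)_n$ explicitly near $\bm 0$ by taking a quadratic correction of a linear function whose $z$-slope is chosen carefully to dominate $W$ locally. Since $W$ is semiconcave, there exists $K>0$ such that $W(\bm q)-\frac{K}{2}|\bm q|^2$ is concave on $\mathcal{O}'$; equivalently $W$ admits a nonempty superdifferential $D^+W(\bm 0)$ at every point, and in the $z$-direction the one-sided derivatives satisfy $W_z^-(\bm 0)\geq W_z^+(\bm 0)$. The hypothesis is that this inequality is strict, so we may fix a slope $a$ with $W_z^+(\bm 0)<a<W_z^-(\bm 0)$.

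First I would exploit the one-sided Taylor/superdifferential bounds coming from semiconcavity. For a semiconcave $W$, at the point $\bm 0$ one has, for $\bm q$ in a neighborhood, an estimate of the form $W(\bm q)\leq W(\bm 0)+\langle \bm p,\bm q\rangle+\frac{K}{2}|\bm q|^2$ for every $\bm p\in D^+W(\bm 0)$. The key point is that the strict gap between left and right $z$-derivatives means the chosen slope $a$ sits strictly inside the projection of $D^+W(\bm 0)$ onto the $z$-axis, so I can pick a supergradient $\bm p=(p_x,p_y,a)$ with middle component matching $a$. Writing $\ell(\bm q):=\langle \bm p,\bm q\rangle$, semiconcavity gives $W(\bm q)\leq \ell(\bm q)+\frac{K}{2}|\bm q|^2$ locally, with equality at $\bm 0$.

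Next I would define the candidate functions by adding a strongly concave parabola in $z$ alone:
\begin{equation}
\label{eq:phinconstr}
\varphi^n(\bm q):=\ell(\bm q)+\frac{K}{2}\big(x^2+y^2\big)-\tfrac{n}{2}z^2.
\end{equation}
Each $\varphi^n\in C^2(\mathcal{O}')$, satisfies $\varphi^n(\bm 0)=W(\bm 0)=0$, has $D\varphi^n(\bm 0)=\bm p$ so $|D\varphi^n(\bm 0)|=|\bm p|=:L<\infty$ uniformly in $n$, and has $\varphi^n_{zz}(\bm 0)=-n\to-\infty$, giving three of the four conditions immediately. The domination $\varphi^n\geq W$ near $\bm 0$ is where the strict gap must do real work: the bound $W(\bm q)\leq \ell(\bm q)+\frac{K}{2}(x^2+y^2+z^2)$ shows that $\varphi^n(\bm q)-W(\bm q)\geq \frac{K}{2}z^2-\frac{n}{2}z^2-\frac{K}{2}z^2$ is \emph{not} automatically nonnegative for large $n$ along the $z$-axis, so the naive bound fails. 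I therefore expect the main obstacle to be exactly this: ensuring $\varphi^n\geq W$ in a (possibly $n$-dependent) neighborhood while still forcing $\varphi^n_{zz}(\bm 0)\to-\infty$.

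To overcome this I would use the strict inequality on the one-sided $z$-derivatives more sharply, via a sharper one-sided expansion along the $z$-axis: because $a>W_z^+(\bm 0)$ there is $\eps>0$ with $W(0,0,z)\leq az-\eps|z|$ for small $z>0$, and because $a<W_z^-(\bm 0)$ similarly $W(0,0,z)\leq az-\eps|z|$ for small $z<0$; hence $W(0,0,z)\leq az-\eps|z|$ for all small $z$. The linear-in-$|z|$ gain $-\eps|z|$ dominates the parabola $-\frac{n}{2}z^2$ on the shrinking interval $|z|\leq 2\eps/n$, so on that $n$-dependent interval (and, using Lipschitz/semiconcavity control in $x,y$, on a product neighborhood) one gets $\varphi^n\geq W$. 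Thus the neighborhood on which domination holds is allowed to shrink with $n$, which is consistent with the statement \eqref{eq:varphin} requiring only domination "in a neighborhood of $\bm 0$." I would finish by checking the off-axis directions using the semiconcavity bound for the $x,y$ components (absorbed by the $\frac{K}{2}(x^2+y^2)$ term) together with this $z$-axis estimate, combining them via the mixed-term control that semiconcavity provides, to conclude $\varphi^n\geq W$ on the chosen neighborhood and hence verify all four requirements in \eqref{eq:varphin}.
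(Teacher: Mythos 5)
Your setup (choosing $a$ with $W_z^+(\bm 0)<a<W_z^-(\bm 0)$, picking a supergradient $\bm p=(p_x,p_y,a)$, and the one-dimensional estimate $W(0,0,z)\le az-\varepsilon|z|$ for small $|z|$) is correct, but the final domination step has a genuine gap that cannot be closed for your choice of $\varphi^n$: the quadratic penalty $-\tfrac n2 z^2$ points in the wrong direction, and the ``mixed-term control'' you appeal to does not exist. Concretely, take $W(x,y,z)=\min(x+z,\,-z)$, which is concave (hence semiconcave for any fixed $K\ge 0$) and has $W_z^+(\bm 0)=-1<1=W_z^-(\bm 0)$. The unique supergradient with third component $a\in(-1,1)$ is $\bm p=\big(\tfrac{1+a}{2},0,a\big)$, so your test function is $\varphi^n(x,y,z)=\tfrac{1+a}{2}x+az+\tfrac K2(x^2+y^2)-\tfrac n2 z^2$. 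Along the ridge $z=-x/2$, where the two affine pieces of $W$ coincide, one computes $W=x/2$ and $\varphi^n=x/2+\big(\tfrac K2-\tfrac n8\big)x^2$; hence for every $n>4K$ the inequality $\varphi^n\ge W$ fails at points arbitrarily close to $\bm 0$, i.e.\ in \emph{every} neighborhood, however small and $n$-dependent. The structural reason is that the kink of $W$ sits on a ridge which is not contained in $\{z=0\}$: along the ridge $W$ is linear and your linear part matches it exactly, so the deficit $-\tfrac n2z^2<0$ is uncompensated (the term $\tfrac K2(x^2+y^2)$ could only absorb it if $K$ grew like $n$, but $K$ is the fixed semiconcavity constant). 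Equivalently, the linear gain $-\varepsilon|z|$ is valid only \emph{on} the $z$-axis, and transporting it off the axis costs a first-order error in $(x,y)$ that no fixed quadratic absorbs.

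The paper's proof penalizes in the correct direction, and this is the idea your argument is missing. After reducing to a concave $\widehat W$, it invokes Rockafellar's theorem to get two supergradients $\bm\eta,\bm\zeta\in D^+\widehat{W}(\bm 0)$ with $\eta_z>\zeta_z$, majorizes $\widehat W$ by $g=\langle \bm\eta,\cdot\rangle\wedge\langle \bm\zeta,\cdot\rangle$, and then smooths the kink of $g$ by a parabola in the scalar variable $s=\langle \bm q,\bm\eta-\bm\zeta\rangle/|\bm\eta-\bm\zeta|$, i.e.\ the penalty $-\tfrac n2 s^2$ acts along $\bm\eta-\bm\zeta$, which is orthogonal to the ridge $\{s=0\}$. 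In that variable the exact identity
\begin{equation*}
g(\bm q)=g(\Pi\bm q)+\frac{\langle \bm\eta+\bm\zeta,\bm\eta-\bm\zeta\rangle}{2|\bm\eta-\bm\zeta|}\,s-\frac{|\bm\eta-\bm\zeta|}{2}\,|s|
\end{equation*}
provides the linear gain $-\tfrac{|\bm\eta-\bm\zeta|}{2}|s|$ in precisely the penalized coordinate, valid in a full neighborhood, so domination holds for $|s|\le|\bm\eta-\bm\zeta|/n$; and since $\eta_z\neq\zeta_z$ one still gets $\varphi^n_{zz}(\bm 0)=-n\,(\eta_z-\zeta_z)^2/|\bm\eta-\bm\zeta|^2\to-\infty$. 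To salvage your construction you would have to replace $-\tfrac n2 z^2$ by $-\tfrac n2 s^2$ and take the linear part with slope $\tfrac12(\bm\eta+\bm\zeta)$ --- which is exactly the paper's construction.
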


\begin{proof}
Since $W$ is semiconcave, there exists $C_0\geq 0$ such that
$$
\widehat{W}: \mathcal{O}'\to \R, \ \ \ \widehat{W}(x,y,z):= W(x,y,z)-{C}_0\big(x^2+y^2+z^2\big),
$$
is concave. Fix such a $C_0$. 
%
Since $W_z^-(\bm 0)>  W_z^+(\bm 0)$, also $\widehat{W}_z^-(\bm 0)>\widehat{W}_z^+(\bm 0)$ and it is clear that it is equivalent to show the claim for $\widehat{W}$. By \cite[Theorem 23.4]{Rock}, it follows that there exist
$$	\bm{\eta}=(\eta_x,\eta_y,\eta_z), \ \bm{\zeta}=(\zeta_x,\zeta_y,\zeta_z) \in D^+W(\bm 0) \quad \text{such that} \quad \eta_z> \zeta_z.$$
Set
$$g(\bm q):=\langle \bm \eta,\bm q \rangle \wedge \langle \bm \zeta, \bm q\rangle$$ and 
notice that $\widehat{W}(\bm 0)=0=g(\bm 0)$ and that, by concavity,
$$
\widehat{W}(\bm q )\leq g(\bm q) \ \ \forall \bm q\in\mathcal{O}'.
$$
Define
$$A:=\mbox{Span}\{\bm\eta-\bm\zeta\}^\perp,$$ and denote by $\Pi:\R^3\to A$ the orthogonal projection on $A$. Given $\textbf{q}\in \R^3$ we then have the decompositon
$$
\textbf{q}=\Pi \textbf{q}+ \frac{\bm{\eta}-\bm{\zeta}}{|\bm\eta-\bm\zeta|}\,s, \ \ \ s=\frac{\langle \textbf{q},\bm{\eta}-\bm{\zeta}\rangle}{|\bm\eta-\bm\zeta|}.
$$
Define, for $\bm q\in\mathcal{O}'$,
$$
\varphi^n(\bm q):=g(\Pi \bm q)+\psi^n(s),
$$
where 
$$
\psi^n:\R\to\R, \ \ \psi^n(s)=-\frac{n}{2}s^2+\frac{1}{2}\frac{\langle \bm\eta+\bm\zeta, \bm\eta-\bm\zeta\rangle}{|\bm\eta-\bm\zeta|} s.
$$
This sequence realizes \eqref{eq:varphin}.
Indeed, the first two properties 
hold by construction; in particular the second one is due to the fact that
we have
$$
g(\bm q)=g(\Pi\bm q)+\begin{cases} 
\frac{\langle \bm \zeta,  \bm\eta-\bm\zeta\rangle}{|\bm\eta-\bm\zeta|}s \ \ \ \mbox{if} \ s\geq 0, \\\\
  \frac{\langle \bm \eta,  \bm\eta-\bm\zeta\rangle}{|\bm\eta-\bm\zeta|}s \ \ \ \mbox{if} \ s<0.
\end{cases}
$$
As for the last two properties, we notice that
$$
D \varphi^n(\bm q)= \Pi\bm \eta \ (=\Pi\bm \zeta)+ \frac{\bm \eta -\bm \zeta}{|\bm \eta-\bm \zeta|}\frac{\d\psi^n}{\d s}(s), 
$$
so
$$
\varphi^n_z(\bm q)= \langle \Pi\bm \eta, (0,0,1)\rangle + \left\langle \frac{\bm \eta -\bm \zeta}{|\bm \eta-\bm \zeta|},(0,0,1)\right\rangle\frac{\d\psi^n}{\d s}(s) = \langle \Pi\bm \eta, (0,0,1)\rangle +  \frac{\eta_z -\zeta_z}{|\bm \eta-\bm \zeta|}\frac{\d\psi^n}{\d s}(s), 
$$
$$
\varphi^n_{zz}(\bm q)=  \frac{\eta_z -\zeta_z}{|\bm \eta-\bm \zeta|}\frac{\d^2\psi^n}{\d s^2}(s), 
$$
which then imply them. 
\end{proof}

Denote by $\textbf{q}=(q_1,q_2,q_3):=(x,y,z)$ an arbitrary point of $\mathcal{O}$. For any multi-index $\alpha:=(\alpha_1,\alpha_2,\alpha_3) \in \mathbb{N}^3$ we denote by $|\alpha| = \sum_{i=1}^3 \alpha_i$ and $D^{\alpha}_{\textbf{q}}=\partial^{|\alpha|}/\partial_{q_1}^{\alpha_1}\dots \partial_{q_3}^{\alpha_3}$, with the convention that $\partial^{0}$ is the identity.

\begin{proposition}
\label{prop:A1}
For any $\textbf{q} \in \mathcal{O}$ the (uncontrolled) process $(\textbf{Q}^{\textbf{q}}_t)_t:=(S^{x,y,z}_t, I^{x,y,z}_t, \beta^z_t)$ admits a transition density $p$ which is absolutely continuous with respect to the Lebesgue measure in $\R^3$, infinitely many times differentiable, and satisfies the Gaussian estimates
\begin{equation}
\label{eq:Gauss1}
p(t, \textbf{q}; \textbf{q}') \leq \frac{C_0(t)(1 + |\textbf{q}|)^{m_0}}{t^{\frac{n_0}{2}}} e^{- \frac{D_0(t)|\textbf{q}'- \textbf{q}|^2}{t}}, \quad \forall  t>0,\, \textbf{q}'=(x',y',z') \in \mathcal{O},
\end{equation}
\begin{equation}
\label{eq:Gauss2}
|D^{\alpha}_{\textbf{q}}p(t, \textbf{q}; \textbf{q}')| \leq \frac{C_{\alpha}(t)(1 + |\textbf{q}|)^{m_{\alpha}}}{t^{\frac{n_{\alpha}}{2}}} e^{- \frac{D_{\alpha}(t)|\textbf{q}'- \textbf{q}|^2}{t}}, \quad \forall  t>0,\, \textbf{q}'=(x',y',z') \in \mathcal{O}.
\end{equation}
Here, $C_0$, $D_0$, $C_{\alpha}$, and $D_{\alpha}$ are increasing functions of time. 
\end{proposition}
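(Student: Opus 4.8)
The plan is to recognize the generator $\mathcal{L}$ in \eqref{eq:L} as the operator associated to the degenerate system \eqref{system-OC} with $\xi\equiv 0$, driven by a single Brownian motion, and to establish smoothness of the transition density through H\"ormander's hypoellipticity together with the Malliavin calculus (cf.\ \cite{Nualart}). The crux is to verify the (parabolic) H\"ormander condition; once this is done, joint smoothness of $p$ is automatic and the Gaussian-type bounds follow from quantitative estimates on the density and its Malliavin derivatives. First I would rewrite \eqref{eq:Z} (with $\xi\equiv 0$) in Stratonovich form. Since the diffusion acts only on the $z$-coordinate through $\sigma(z)$, the It\^o--Stratonovich correction modifies only the drift of $\beta$, so the uncontrolled dynamics is generated by the drift and diffusion vector fields
\begin{equation}
\label{eq:vfields}
V_0 = -xyz\,\partial_x + (xyz-\alpha y)\,\partial_y + \Big(b(z,0)-\tfrac12\sigma(z)\sigma'(z)\Big)\partial_z, \qquad V_1 = \sigma(z)\,\partial_z .
\end{equation}

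Next I would compute the iterated Lie brackets needed to recover the two missing directions $\partial_x,\partial_y$. A direct calculation gives
\begin{equation}
\label{eq:bracket1}
[V_0,V_1] = \sigma(z)\,xy\,(\partial_x-\partial_y) + r_1(z)\,\partial_z ,
\end{equation}
for a smooth function $r_1$ depending only on $z$, so that $V_1$ and $[V_0,V_1]$ already span the $z$-direction and the antisymmetric direction $\partial_x-\partial_y$. The decisive asymmetry is produced by the mortality term $-\alpha y$ in the $I$-dynamics: one further bracket against $V_0$ shows that the sum of the $\partial_x$- and $\partial_y$-coefficients of $[V_0,[V_0,V_1]]$ equals $-\alpha\,\sigma(z)\,xy$, which is nonzero on $\mathcal{O}$. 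Writing the three vector fields as the rows of a $3\times 3$ matrix, its determinant is therefore
\begin{equation}
\label{eq:Hormander}
\det\big(V_1,\,[V_0,V_1],\,[V_0,[V_0,V_1]]\big) = -\alpha\,\sigma(z)^3\,x^2y^2 \neq 0 \qquad \text{for all } (x,y,z)\in\mathcal{O},
\end{equation}
since $x,y>0$, $\sigma(z)>0$ and $\alpha>0$. This is precisely the parabolic H\"ormander condition, and it holds for every choice of the model's parameters.

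Given \eqref{eq:Hormander}, H\"ormander's theorem (equivalently, nondegeneracy of the Malliavin covariance matrix together with the integrability of its inverse, cf.\ \cite{Nualart}) yields that $\textbf{Q}^{\textbf{q}}_t$ admits a density $p(t,\textbf{q};\cdot)$ which is jointly $C^\infty$ in $(t,\textbf{q}')$ for $t>0$. The Gaussian bounds \eqref{eq:Gauss1}--\eqref{eq:Gauss2} would then be obtained from the corresponding short- and long-time estimates for hypoelliptic kernels: boundedness of $\sigma$ and of $b(\cdot,0)$ together with their derivatives (Assumption \ref{assZ}) controls the diffusive coordinate, while the only source of unboundedness in \eqref{eq:vfields} is the linear-in-$z$ coefficient $xyz$, and it is this growth that produces the polynomial prefactors $(1+|\textbf{q}|)^{m_0}$ and $(1+|\textbf{q}|)^{m_\alpha}$. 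The degeneracy of the operator, together with the passage from the intrinsic (control) distance to the Euclidean distance $|\textbf{q}'-\textbf{q}|$, is absorbed into the time-increasing constants $C_0,D_0,C_\alpha,D_\alpha$.

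I expect the bracket verification \eqref{eq:Hormander} to be short once the right iterated bracket $[V_0,[V_0,V_1]]$ is singled out; the main obstacle is the last step, namely deriving the quantitative estimates \eqref{eq:Gauss1}--\eqref{eq:Gauss2} rather than mere smoothness. The difficulty is twofold: the operator is genuinely degenerate (one Brownian motion in dimension three), so classical Aronson-type bounds do not apply and one must work with the sub-Riemannian structure generated by $V_1$ and its brackets; and the first-order coefficient $xyz$ is unbounded on $\mathcal{O}$, forcing polynomial weights and a careful control of the growth in the initial datum $\textbf{q}$ of the Malliavin derivatives of $\textbf{Q}^{\textbf{q}}_t$. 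I would handle this by bounding the moments of the Malliavin matrix and of its inverse uniformly on compacts and combining them with the integration-by-parts formula to control $D^{\alpha}_{\textbf{q}}p$.
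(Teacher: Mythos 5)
Your proposal is correct and follows essentially the same route as the paper: the paper verifies H\"ormander's condition with exactly the same iterated brackets (its $\Sigma_3$, $[\mu,\Sigma_3]$, $[\mu,[\mu,\Sigma_3]]$ are your $V_1$, $[V_0,V_1]$, $[V_0,[V_0,V_1]]$, the It\^o-versus-Stratonovich drift correction affecting only the $z$-components and hence not the determinant), arrives at the same determinant $-\alpha x^2 y^2 \sigma(z)^3 \neq 0$ on $\mathcal{O}$, and then invokes Theorem 2.3.3 of Nualart for the smooth density. The only practical difference is in the last step, which you flag as the main obstacle and propose to attack by Malliavin covariance moment bounds: the paper instead obtains the Gaussian estimates \eqref{eq:Gauss1}--\eqref{eq:Gauss2} directly by citing Theorem 9 and Remark 11 of Bally (and Kusuoka--Stroock), whose hypotheses (bounded derivatives, at most linear growth of the coefficients) are exactly what Assumption \ref{assZ} and the form of the drift guarantee.
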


\begin{proof}
Given $f,g \in C^1(\R^3;\R^3)$, define the Lie bracket 
$$[f,g]:=\sum_{j=1}^3 \Big(\frac{\partial g}{\partial q_j} f - \frac{\partial f}{\partial q_j} g\Big).$$
Then, for any given and fixed $\textbf{q}\in \mathcal{O}$, we set
\begin{eqnarray*}
\mu(\textbf{q}):= \left(\begin{array}{c}
-xyz\\xyz-\alpha y\\ b(z,0)
\end{array}\right)
\quad \textrm{ and } \quad 
\Sigma(\textbf{q}):= \left(\begin{array}{c}
0 \quad 0 \quad  0 \\ 0 \quad  0 \quad 0 \\ 0 \quad  0 \quad \sigma(z)
\end{array}\right)
\end{eqnarray*}
and denoting by $\Sigma_i$, $i=1,2,3$, the columns of the matrix $\Sigma$, we construct recursively the set of functions $L_0:=\{\Sigma_1, \Sigma_2, \Sigma_3\}$, $L_{k+1}:= \{[\mu,\varphi], [\Sigma_1,\varphi], [\Sigma_2,\varphi], [\Sigma_3,\varphi]\,:\, \varphi \in L_k\}$, $k\geq0$. We also define $L_{\infty}:=\cup_{k\geq0}L_k$.
We say that the H\"ormander condition holds true at $\textbf{q}\in \mathcal{O}$ if
\begin{equation}
\label{eq:Hormander}
\Span\big\{\varphi(\textbf{q}),\, \varphi \in L_{\infty}\big\} = \R^3.
\end{equation}
Direct calculations show that
\begin{eqnarray*}
L_0(\textbf{q})=\{\Sigma_3\}(\textbf{q})=\left\{\left(\begin{array}{c}
0\\0\\ \sigma(z) 
\end{array}\right)\right\}
\end{eqnarray*}

\begin{eqnarray*}
L_1(\textbf{q}) = \big\{[\mu,\Sigma_3]\big\}(\textbf{q}) = \left\{
\left(\begin{array}{c}
xy\sigma(z)\\ -xy\sigma(z) \\  \sigma_z(z)b(z,0)-\sigma(z) b_z(z,0)
\end{array}\right)\right\}
\end{eqnarray*}
and
\item \begin{eqnarray*}
&& L_2(\textbf{q})= \big\{ [\mu,[\mu,\Sigma_3]], [\Sigma_3,[\mu,\Sigma_3]]\big\}(\textbf{q})\\
&& = \left\{\left(\begin{array}{c}
xy(2b(z,0) \sigma_z(z) - \alpha \sigma(z) - \sigma(z) b_z(z,0))\\ 
xy (\sigma(z)b_z(z,0)  - 2 b(z,0) \sigma_z(z))\\         
b(z,0)^2 \sigma_{zz}(z) - \sigma(z) b(z,0) b_{zz}(z,0) + \sigma(z) b_z(z,0)^2 - \sigma_z(z) b(z,0) b_z(z,0)
 \end{array}\right),\right.\\
&&\left.\left(\begin{array}{c}
xy \sigma(z)\sigma_z(z)\\                           
-xy \sigma(z) \sigma_z(z)\\ 
b(z,0) \sigma(z) \sigma_{zz}(z) - \sigma(z)^2 b_{zz}(z,0) - b(z,0) \sigma_z(z)^2 + \sigma(z) b_z(z,0) \sigma_z(z)
 \end{array}\right)
\right\}
\end{eqnarray*}
Hence, the matrix associated to $(L_0 \cup L_1 \cup L_2)(\textbf{q})$ has the sub-matrix formed by all its rows and its first three columns with determinant $-\alpha x^2 y^2 \sigma^3(z) < 0$. Hence, \eqref{eq:Hormander} holds true on $\mathcal{O}$ given the arbitrariness of $\textbf{q}$.

Therefore, by Theorem 2.3.3 in \cite{Nualart}, for any $t>0$ the uncontrolled process $(S^{x,y,z}_t, I^{x,y,z}_t, \beta^z_t)_t$ admits a transition density $p$ which is absolutely continuous with respect to the Lebesgue measure in $\R^3$, and infinitely many times differentiable. Moreover, Theorem 9 and Remark 11 in \cite{Bally} (see also \cite{KuStr}) show that $p$ satisfies the Gaussian estimates \eqref{eq:Gauss1} and \eqref{eq:Gauss2}. This completes the proof.
\end{proof}

{{
\begin{proposition}
\label{prop:betadyn}
The dynamics of $\beta$ as in \eqref{eq:ZOUbis} satisfy Assumption \ref{assZ}.
\end{proposition}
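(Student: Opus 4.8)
The plan is to read off from \eqref{eq:ZOUbis} the coefficients $b(z,\xi)=\vartheta\big(\widehat\beta(L-\xi)-z\big)$ and $\sigma(z)=\sigma z(\gamma-z)$, to take $\mathcal I=(0,\gamma)$, and to check the three items of Assumption \ref{assZ} in turn, the non-attainability of the endpoints $0$ and $\gamma$ being the only substantial point.

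Items (ii) and (iii) I would dispatch first, since they are immediate. The map $z\mapsto b(z,\xi)$ is affine, with $\partial_z b\equiv-\vartheta$ and $\partial_z^n b\equiv0$ for $n\ge2$, while $b$ itself ranges in the bounded set $[-\vartheta\gamma,\vartheta\widehat\beta L]$ on $\mathcal I\times[0,L]$; hence all the bounds required in (ii) hold with, e.g., $K_b=\max\{\vartheta\gamma,\vartheta\widehat\beta L,\vartheta\}$. Similarly $\sigma$ is the quadratic $\sigma(\gamma z-z^2)$, strictly positive on $(0,\gamma)$, with $\sigma'(z)=\sigma(\gamma-2z)$, $\sigma''\equiv-2\sigma$ and $\sigma^{(n)}\equiv0$ for $n\ge3$; all of these are bounded on $(0,\gamma)$, so (iii) follows with $K_\sigma=\max\{\sigma\gamma^2/4,\sigma\gamma,2\sigma\}$.

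For item (i) I would argue in two steps. Since $b(\cdot,\xi)$ and $\sigma$ are smooth, hence locally Lipschitz, on $(0,\gamma)$, classical SDE theory gives, for every $\xi\in\mathcal A$, a unique strong solution up to the exit time $\tau:=\inf\{t\ge0:\beta_t\notin(0,\gamma)\}$; it then remains only to show $\tau=\infty$ a.s., which simultaneously yields global existence/uniqueness and confines the process to $\mathcal I=(0,\gamma)$. To this end I would introduce the Lyapunov function $\phi(z):=-\log\big(z(\gamma-z)\big)$, which tends to $+\infty$ at both endpoints and is bounded below on $(0,\gamma)$. Writing $\mathcal G^\xi\varphi(z):=\tfrac12\sigma(z)^2\varphi''(z)+b(z,\xi)\varphi'(z)$ for the generator of \eqref{eq:ZOUbis}, a direct computation gives
\begin{equation*}
\mathcal G^\xi\phi(z)=\frac{\sigma^2}{2}\big(z^2+(\gamma-z)^2\big)+\vartheta\big(\widehat\beta(L-\xi)-z\big)\frac{2z-\gamma}{z(\gamma-z)},
\end{equation*}
where $\sigma$ now denotes the constant in \eqref{eq:ZOUbis}. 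The first term is bounded on $(0,\gamma)$. For the second, as $z\to0^+$ the factor $\widehat\beta(L-\xi)-z\to\widehat\beta(L-\xi)\ge0$ while $(2z-\gamma)/(z(\gamma-z))\to-\infty$, so the product tends to $-\infty$ (the borderline $\xi=L$ being harmless, as the numerator then cancels the singular $1/z$); as $z\to\gamma^-$ the factor tends to $\widehat\beta(L-\xi)-\gamma\le\widehat\beta L-\gamma\le0$ while $(2z-\gamma)/(z(\gamma-z))\to+\infty$, so again the product is bounded above. Consequently $\mathcal G^\xi\phi\le C$ on $(0,\gamma)$ for a constant $C$ independent of $\xi$.

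I would then conclude by localization: with $\tau_n:=\inf\{t:\beta_t\notin(1/n,\gamma-1/n)\}$ and $\widetilde\phi:=\phi+c_0\ge1$, the estimate $\mathcal G^\xi\widetilde\phi\le C\le C\widetilde\phi$ makes $e^{-Ct}\widetilde\phi(\beta_{t\wedge\tau_n})$ a bounded supermartingale, whence $\E[e^{-C(t\wedge\tau_n)}\widetilde\phi(\beta_{t\wedge\tau_n})]\le\widetilde\phi(z)$; since $\widetilde\phi(\beta_{\tau_n})\ge m_n\to\infty$ on $\{\tau_n\le t\}$, this forces $\P(\tau_n\le t)\le e^{Ct}\widetilde\phi(z)/m_n\to0$, i.e.\ $\P(\tau\le t)=0$ for every $t$, so $\tau=\infty$ a.s. The main obstacle is precisely the behaviour at the upper endpoint $\gamma$: there the drift must point inward, which requires $\widehat\beta(L-\xi)\le\gamma$, i.e.\ $\widehat\beta L\le\gamma$ (automatically satisfied whenever $L\le1$, since $\widehat\beta<\gamma$, which is the regime of the numerics), without which an outward drift would render $\gamma$ attainable despite the vanishing of $\sigma$ there. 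A secondary technical point is that the drift is driven by the \emph{random} control $\xi$, so one cannot simply invoke Feller's test for autonomous one-dimensional diffusions; the virtue of the estimate above is that the bound $\mathcal G^\xi\phi\le C$ is uniform in $\xi$, handling the controlled dynamics directly. Alternatively one could sandwich $\beta$ between the autonomous diffusions obtained by freezing $\xi\equiv0$ and $\xi\equiv L$ via a comparison theorem and apply Feller's test to these, but the Lyapunov route is more self-contained.
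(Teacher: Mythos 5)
Your proof is correct, but it takes a genuinely different route from the paper's on the one substantial point, namely part (i). The paper first extends the diffusion coefficient by zero outside $(0,\gamma)$ so as to obtain globally Lipschitz, sublinearly growing coefficients on all of $\R$ (hence global strong existence and uniqueness for the modified SDE), then uses the monotonicity of $\xi\mapsto b(z,\xi)$ and a comparison theorem (Protter, Ch.\ V, Thm.\ 54) to sandwich the controlled solution between the two \emph{autonomous} diffusions obtained by freezing $\xi\equiv L$ and $\xi\equiv 0$, and finally applies Feller's test for explosions to those two frozen diffusions to conclude that the sandwich stays in $(0,\gamma)$ --- exactly the alternative you sketch and set aside in your last sentence. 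Your Lyapunov/Khasminskii argument with $\phi(z)=-\log\bigl(z(\gamma-z)\bigr)$ instead treats the controlled dynamics directly: the uniform-in-$\xi$ bound $\mathcal G^\xi\phi\le C$ removes any need for a comparison theorem or for reducing to one-dimensional autonomous diffusion theory, and is arguably more self-contained and more robust (it would survive, e.g., a drift that is not monotone in the control). What the paper's route buys is that the two boundary points are handled by the sharp, essentially mechanical Feller criterion, with the control entering only through the sandwich. A further merit of your write-up is that it makes explicit the condition $\widehat\beta L\le\gamma$ needed for the drift to point inward at $\gamma$: the paper never states this restriction (its statement of \eqref{eq:ZOUbis} only imposes $\widehat\beta\in(0,\gamma)$, $L>0$), yet its own Feller-test step would equally fail without it, since an outward drift at $\gamma$ together with the vanishing diffusion makes $\gamma$ attainable; the condition is implicit in the paper's interpretation of $\xi$ as a percentage, i.e.\ $L\le 1$, whence $\widehat\beta L\le\widehat\beta<\gamma$. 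Your treatment of items (ii) and (iii) coincides with the paper's (direct computation on the bounded interval $\mathcal I=(0,\gamma)$).
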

\begin{proof}
For any $z \in \R$ and $\xi \in [0,L]$, recall that $b(z,\xi)=\vartheta(\widehat{\beta}(L-\xi) - z)$ and define
\begin{equation}
\label{sigmatilde}
\widetilde{\sigma}(z)=\left\{
\begin{array}{lr}
\displaystyle 0\,\,\qquad \qquad \ \  \mbox{if $z \in (-\infty, 0]$},\\[+14pt]
\displaystyle \sigma z (\gamma -z)\,\,\quad \mbox{if $z \in (0,\gamma)$},\\[+14pt]
\displaystyle 0\,\,\qquad \qquad \ \ \mbox{if $z \in [\gamma,\infty)$}.
\end{array}
\right.
\end{equation}
Then, for $(\xi_t)_t \in \mathcal{A}$, introduce the stochastic differential equation 
\begin{equation}
\label{eq:betatilde}
\d \widetilde{\beta}_t = b(\widetilde{\beta}_t, \xi_t) \d t + \widetilde{\sigma}(\widetilde{\beta}_t)\d W_t, \quad \widetilde{\beta}_0=z \in \R.
\end{equation}
Because $b$ and $\widetilde{\sigma}$ are Lipschitz-continuous and have sublinear growth, uniformly with respect to $\xi$, for any $(\xi_t)_t \in \mathcal{A}$ there exists a unique strong solution to \eqref{eq:betatilde} starting at $z \in \R$ (see, e.g., Theorem 7 in Chapter V of \cite{Protter}). We denote such a solution by $(\widetilde{\beta}^{\xi,z}_t)_t$.
Since $\xi \mapsto b(z,\xi)$ is decreasing, by Theorem 54 in Chapter V of \cite{Protter}, we have
\begin{equation}\label{comparison}
 \widetilde{\beta}^{L,z}_t\leq \widetilde{\beta}^{\xi,z}_t  \leq \widetilde{\beta}^{0,z}_t, \ \ \ \ \forall t\geq 0 \ \mbox{a.s.},
 \end{equation} where $(\widetilde{\beta}^{L,z}_t)_t$ and $(\widetilde{\beta}^{0,z}_t)_t$) are, respectively, the solution to \eqref{eq:betatilde} with $\xi_t \equiv L$ and with $\xi_t \equiv 0$. On the other hand, by  Feller's test for explosion (cf.\ Proposition 5.22 in Chapter 5.5 of \cite{KS}), it can be checked $\widetilde{\beta}^{L,z}_t >0$ for all $t\geq 0$ a.s.\ and $\widetilde{\beta}^{0,z}_t < \gamma$ for all $t\geq 0$ a.s. Hence, by \eqref{comparison}, we get $\widetilde{\beta}^{\xi,z}_t \in (0,\gamma)$ for all $t\geq 0$ a.s.\ This proves that the SDE \eqref{eq:ZOUbis} admits a unique strong solution which lies within  the open interval  $\mathcal{I}=(0,\gamma)$ (cf.\ Assumption \ref{assZ}-(i)). 

Given the boundedness of the interval $\mathcal{I}=(0,\gamma)$, using the expression of $b$ and \eqref{sigmatilde} it is straightforward to verify that (ii) and (iii) of Assumption \ref{assZ} are satisfied as well.
\end{proof}
}}


\medskip

\indent \textbf{Acknowledgments.} The authors thank Frank Riedel, Mauro Rosestolato, three anonymous Referees and the Guest Editors for interesting comments and suggestions.


\end{document}